\numberwithin{equation}{section}
\theoremstyle{plain}
\newtheorem{thm}{Theorem}[section]
\newtheorem{lemma}[thm]{Lemma}
\newtheorem{sublemma}{}[thm]
\newtheorem*{lemma*}{Lemma}
\newcommand{\ba}{\backslash}
\theoremstyle{definition}
\newcommand{\del}{\backslash}
\DeclareMathAlphabet{\mathdutchcal}{U}{dutchcal}{m}{n}
\title[The excluded minors for lattice path polymatroids]{The excluded
  minors for lattice path polymatroids} \author[J.~Bonin]{Joseph
  E.~Bonin} \address[J.~Bonin]
{Department of Mathematics\\ The George Washington University\\
  Washington, D.C. 20052, USA} \email{jbonin@gwu.edu}
\author[C.~Chun]{Carolyn Chun}
\address[C.~Chun]{United States Navel Academy\\
  Mathematics Department\\
  Annapolis, MD, 21402, USA} \email{chun@usna.edu}
\author[T.~Fife]{Tara Fife} \address[T.~Fife]{ School of Mathematical
  Sciences, \\ Queen Mary University of London,\\
  Mile End Road, London E1 4NS, United Kingdom}
\email{fi.tara@gmail.com} \date{\today}
\begin{document}

\begin{abstract}
  We find the excluded minors for the minor-closed class of lattice
  path polymatroids as a subclass of the minor-closed class of Boolean
  polymatroids.  Like lattice path matroids and Boolean polymatroids,
  there are infinitely many excluded minors, but they fall into a
  small number of easily-described types.
\end{abstract}

\maketitle

\section{Introduction}

We consider only polymatroids where the rank $\rho(X)$ of
  each set $X$ is a nonnegative integer, so a \emph{polymatroid} on a
finite set $E$ is a function $\rho: 2^E\to \mathbb{Z}$ that is
\begin{enumerate}
\item \emph{normalized}, that is, $\rho(\emptyset) = 0$,
\item \emph{non-decreasing}, that is, if $A\subseteq B\subseteq E$,
  then $\rho(A)\leq \rho(B)$, and
\item \emph{submodular}, that is,
  $\rho(A\cup B)+\rho(A\cap B)\leq \rho(A) + \rho(B)$ for all
  $A, B\subseteq E$.
\end{enumerate}
Herzog and Hibi \cite{HerzogHibi} treat some equivalent formulations
of polymatroids, which are also called \emph{integer polymatroids} or
\emph{discrete polymatroids}.  We often write the ground set $E$ of
$\rho$ as $E(\rho)$.  For a positive integer $t$, a
\emph{$t$-polymatroid} is a polymatroid $\rho$ on $E$ for which
$\rho(e)\leq t$ for all $e\in E$, or, equivalently, $\rho(A)\leq t|A|$
for all $A\subseteq E$.  Thus, matroids are $1$-polymatroids.  The
definitions of deletion and contraction, when cast for matroids using
the rank function, generalize directly to polymatroids (see Section
\ref{sec:background}), and the notion of a minor carries over
directly.

Let $[k]$ denote the set $\{1,2,\ldots,k\}$.  Let $E$ be a finite set
and let $A_i$, for $i\in[k]$, be (not necessarily distinct) subsets of
$E$.  We get a polymatroid $\rho$ on $E$ by, for $X\subseteq E$,
setting
\begin{equation}\label{eq:bpdef}
  \rho(X) = \bigl|\{i\,:\,X\cap A_i\ne\emptyset\}\bigr|.
\end{equation}
Such polymatroids are \emph{Boolean polymatroids} or \emph{transversal
  polymatroids}.  The class of Boolean polymatroids is minor-closed,
that is, every minor of a polymatroid in this class is also in this
class.

Lattice path polymatroids, introduced by Schweig \cite{Jay}, are
constructed as follows. (See Figure \ref{fig:LPM}.)  Take two lattice
paths $P$ and $Q$ from $(1,0)$ to $(n,k)$, where $P$ never rises above
$Q$.  These paths bound a region of the plane.  We label each north
step (a segment from $(i,j)$ to $(i,j+1)$ where $i$ and $j$ are
integers) in this region by its first coordinate.  For
each
$i\in [k]$, let $A_i$ be the set of labels on the north steps in row
$i$ of this diagram, with $i=1$ indexing the lowest row.  The
polymatroid $\rho$ on $E=[n]$ given by equation (\ref{eq:bpdef}) is
the \emph{lattice path polymatroid determined by the paths $P$ and
  $Q$}.  A \emph{lattice path polymatroid} is any polymatroid that is
isomorphic to such a polymatroid from paths.

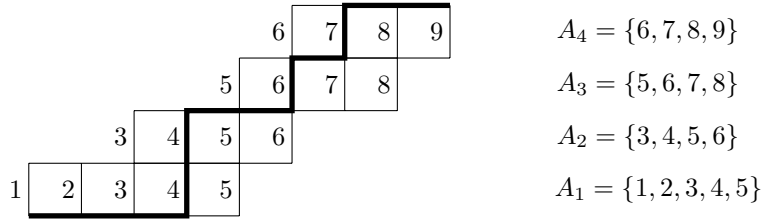
\begin{figure}
  \centering
\begin{tikzpicture}[scale=0.7]
\draw (1,0) grid (5,1); 
\draw (3,1) grid (6,2); 
\draw (5,2) grid (8,3);
\draw (6,3) grid (9,4); 
\draw[line width = 2 pt] (1,0) -- (4,0) -- (4,2) -- (5,2) --
(6,2)--(6,3) -- (7,3)--(7,4)--(9,4); 
\draw (0.75,0.5) node {$1$};
\draw (1.75,0.5) node {$2$};
\draw (2.75,0.5) node {$3$};
\draw (3.75,0.5) node {$4$};
\draw (4.75,0.5) node {$5$};
\draw (2.75,1.5) node {$3$};
\draw (3.75,1.5) node {$4$};
\draw (4.75,1.5) node {$5$};
\draw (5.75,1.5) node {$6$};
\draw (4.75,2.5) node {$5$};
\draw (5.75,2.5) node {$6$};
\draw (6.75,2.5) node {$7$};
\draw (7.75,2.5) node {$8$};
\draw (5.75,3.5) node {$6$};
\draw (6.75,3.5) node {$7$};
\draw (7.75,3.5) node {$8$};
\draw (8.75,3.5) node {$9$};

\draw (12.75,3.5) node {$A_4=\{6,7,8,9\}$};
\draw (12.75,2.5) node {$A_3=\{5,6,7,8\}$};
\draw (12.75,1.5) node {$A_2=\{3,4,5,6\}$};
\draw (12.95,0.5) node {$A_1=\{1,2,3,4,5\}$};
\end{tikzpicture}
\caption{An example of the labeling of north steps and the sets of
  interest in the construction of a lattice path polymatroid.}
  \label{fig:LPM}
\end{figure}

Lattice path polymatroids have many interesting properties
\cite{lppexchange,Jay,Jay2}.  Most lattice path matroids \cite{lpm1}
are not lattice path polymatroids, but the two structures have much in
common.  Lattice path polymatroids form a minor-closed class of
Boolean polymatroids. In this paper, we find the Boolean polymatroids
that are excluded minors for the class of lattice path polymatroids.
That is, we find the set $\mathcal{E}$ of Boolean polymatroids for
which a Boolean polymatroid $\rho$ is a lattice path polymatroid if
and only if no member of $\mathcal{E}$ is a minor of $\rho$.  The set
$\mathcal{E}$, combined with the set of excluded minors for Boolean
polymatroids, which was found by Mat\'u\v{s} \cite{Matus}, gives a
complete characterization of lattice path polymatroids. (Note however
that, as we explain in Section \ref{sec:background}, each member of
$\mathcal{E}$ is a proper minor of infinitely many of the excluded
minors that Mat\'u\v{s} identified.)  Lattice path matroids
\cite{lpmexclmin} and Boolean polymatroids \cite{Matus} have
infinitely many excluded minors, and the same is true of lattice path
polymatroids: the set $\mathcal{E}$ is infinite.  There are nine types
of excluded minors in $\mathcal{E}$, and each type includes infinitely
many polymatroids: four of the types are made up of polymatroids on
three elements, four other types are made up of polymatroids on four
elements, and one type is made up of polymatroids on any number of
elements greater than two.

In Section \ref{sec:background}, we review the relevant background on
polymatroids, minors, Boolean polymatroids, and lattice path
polymatroids; in particular, we state the excluded-minor
characterization of Boolean polymatroids by Mat\'u\v{s} \cite{Matus}
and explain how it relates to our results.  In Section
\ref{sec:cycles}, we identify the type of excluded minors that can
have any number of elements three or greater.  In Section
\ref{sec:34}, we identify the types of excluded minors on three or
four elements.  In Section \ref{sec:main}, specifically Theorem
\ref{excludedminors}, we prove that the collection of excluded minors
identified in Sections \ref{sec:cycles} and \ref{sec:34}
is complete.

\section{Background}\label{sec:background}

Let $\rho$ be a polymatroid on $E$.  For $A\subseteq E$, the
\emph{deletion $\rho_{\del A}$} and \emph{contraction $\rho_{/A}$},
which are polymatroids on $E-A$, are given by
$\rho_{\del A}(X) = \rho(X)$ and
$\rho_{/A}(X) = \rho(X\cup A)-\rho(A)$ for all $X\subseteq E-A$. The
\emph{minors} of $\rho$ are the polymatroids of the form
$(\rho_{\del A})_{/B}$ (equivalently, $(\rho_{/B})_{\del A}$) for
disjoint subsets $A$ and $B$ of $E$.

If $M_1,M_2,\ldots,M_k$ are matroids on $E$, then the function
$\rho:2^E\to\mathbb{Z}$ given by
$$\rho(X) = r_{M_1}(X)+ r_{M_2}(X)+\cdots+ r_{M_k}(X),$$ for
$X\subseteq E$, is a polymatroid on $E$.  We write
$\rho = r_{M_1}+ r_{M_2}+\cdots+ r_{M_k}$ to express this more
briefly.  (Not all polymatroids have such decompositions into matroid
rank functions.)  It is easy to check that if
$\rho = r_{M_1}+ r_{M_2}+\cdots+ r_{M_k}$, then
$$\rho_{\del A} = r_{M_1\del A}+ r_{M_2\del A}+\cdots+ r_{M_k\del A}
\quad\text{and}\quad \rho_{/A} = r_{M_1/A}+ r_{M_2/A}+\cdots+
r_{M_k/A}.$$

It is easy to see that the Boolean polymatroid $\rho$ on $E$ given by
equation (\ref{eq:bpdef}), using the nonempty sets $A_i$
for $i\in[k]$, can be written as the sum of $k$ rank-$1$ matroids on
$E$, namely, 
\begin{equation}\label{eq:decomp}
  \rho=r_{U_{1,A_1}\oplus U_{0,E-A_1}}+ r_{U_{1,A_2}\oplus
    U_{0,E-A_2}}+\cdots+r_{U_{1,A_k}\oplus U_{0,E-A_k}},
\end{equation}
where, adapting the usual notation $U_{r,n}$ for uniform matroids,
$U_{r,A}$ is the rank-$r$ uniform matroid on the set $A$.

For completeness, we next prove a
result that, while not new, is crucial to our work: the Boolean
polymatroid $\rho$ determines the nonempty sets $A_i$ (but, of course,
not how these sets are indexed).

\begin{lemma}\label{lem:AisUnique}
  Let $A_1,A_2,\ldots,A_k$ be (not necessarily
    distinct) nonempty subsets of a set $E$ and let $\rho$ be the
  Boolean polymatroid on $E$ given by equation
  \emph{(\ref{eq:decomp})}.  For each nonempty subset
  $Z$ of $E$, the multiplicity of $Z$ in the list $A_1,A_2,\ldots,A_k$
  can be computed from $\rho$.
\end{lemma}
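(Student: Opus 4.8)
The plan is to recover each multiplicity by inverting a single counting identity, reducing everything to values of $\rho$. First I would record that $\rho(E) = k$: every $A_i$ is nonempty and hence meets $E$, so the count in equation~(\ref{eq:bpdef}) applied to $X = E$ returns $k$. Thus $k$ is already determined by $\rho$. The crucial observation is a dualization: for any subset $S \subseteq E$, a set $A_i$ fails to be contained in $S$ exactly when $A_i$ meets $E - S$, and by definition $\rho(E - S)$ counts precisely those indices $i$. Hence the number of indices $i$ with $A_i \subseteq S$ equals $k - \rho(E - S)$. Writing $f(S)$ for this count, I will have expressed $f(S)$ entirely in terms of $\rho$.

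The second step relates $f$ to the quantities we actually want. For a nonempty $Z \subseteq E$, let $m_Z$ denote the multiplicity of $Z$ in the list $A_1, A_2, \ldots, A_k$. Since the condition $A_i \subseteq S$ simply says that the value $A_i$ is one of the subsets of $S$, grouping the indices by the value of $A_i$ gives
$$f(S) = \sum_{Z \subseteq S} m_Z.$$
Here the term $m_\emptyset$ is $0$ because the $A_i$ are nonempty, consistent with $f(\emptyset) = k - \rho(E) = 0$.

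Finally, I would apply Möbius inversion over the Boolean lattice of subsets of $E$ to solve for the $m_Z$, obtaining
$$m_Z = \sum_{W \subseteq Z} (-1)^{|Z| - |W|} f(W).$$
Because each $f(W) = k - \rho(E - W)$ is computable from $\rho$, so is every $m_Z$, which is exactly the assertion. The argument is essentially routine: the only point that requires care is the dualization in the first step, namely recognizing that $\rho(E - S)$ counts the $A_i$ meeting the complement of $S$ and therefore $k - \rho(E - S)$ counts the $A_i$ contained in $S$. Once that identity is in hand, the standard inclusion--exclusion (Möbius) inversion finishes the proof, so I anticipate no serious obstacle.
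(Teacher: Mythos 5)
Your proof is correct and is essentially the paper's argument in complemented form: the paper counts the indices $i$ with $A_i\subseteq E-X$ as $k-\rho(X)$ and then applies inclusion--exclusion over supersets $Y\supseteq X$, which under the substitution $S=E-X$, $W=E-Y$ is exactly your identity $f(S)=k-\rho(E-S)$ followed by M\"obius inversion over subsets $W\subseteq Z$. Your explicit remark that $k=\rho(E)$ is a small but worthwhile addition that the paper leaves implicit.
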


\begin{proof}
  For each $e\in E$, we define a property $p_e$ that each integer
  $i\in[k]$ may or may not have: $i$ has property $p_e$ if
  $e\not\in A_i$.  Fix $X\subseteq E$.  Now $i\in[k]$ has the
  properties $p_e$ with $e\in X$ and no other properties if and only
  if $A_i=E-X$.  The number of $i\in[k]$ that have all the properties
  $p_e$ with $e\in X$, and maybe more, is $k-\rho(X)$.  By
  inclusion-exclusion, it follows that the number of $i\in[k]$ that
  have the properties $p_e$ with $e\in X$ and no others is
  $$\sum_{Y\,:\,X\subseteq Y\subseteq E}(-1)^{|Y-X|}\bigl(k-\rho(Y)\bigr).$$  
  Since this is the number of $i$ for which $A_i=E-X$, the lemma
  follows.
\end{proof}

This argument is closely related to the characterization of Boolean
polymatroids by inequalities involving the rank function in
Mat\'u\v{s} \cite[Lemma 2]{Matus} (see Theorem
\ref{thm:booleaninequalties} below).

Lemma \ref{lem:AisUnique} leads to a well-defined (up to relabeling
the indices) notion of support: given $E$ and $A_i$, for $i\in [k]$,
and $\rho$ given by equation (\ref{eq:decomp}), the \emph{support}
$s(e)$ of $e\in E$ is 
$$s(e) =\{i\,:\,i\in [k] \text{ and } e\in A_i\}.$$ 
The \emph{support of $\rho$} is $\cup _{e\in E} s(e)$, which is $[k]$
if no set $A_i$ is empty.  For example, for the lattice path
polymatroid in Figure \ref{fig:LPM}, we have $s(1)=s(2)=\{1\}$,
$s(3)=s(4)=\{1,2\}$, $s(5)=\{1,2,3\}$, $s(6)=\{2,3,4\}$,
$s(7)=s(8)=\{3,4\}$, and $s(9)=\{4\}$.  Note that for any Boolean
polymatroid $\rho$ on $E$ and all $X\subseteq E$,
\begin{equation}\label{eq:rankbysupport}
\rho(X) = \Bigl|\bigcup_{e\in X}s(e)\Bigr|.
\end{equation}

There is a natural linear order on $[k]$, namely, $1<2<\cdots< k$, as
well as on the ground set $E$ of a lattice path polymatroid determined
by two paths, but this is not so for the excluded minors.  All orders
(or orderings) that we consider are linear orders (also known as total
orders).  They can be denoted by listing the elements from least to
greatest, as in $x_1,x_2,\ldots,x_t$, or via the conventional symbol
for order, as in $x_1<x_2<\cdots<x_t$.

Since lattice path polymatroids in general are just isomorphic to
those determined by two paths, it follows that $\rho$ is a lattice
path polymatroid if and only if there is some ordering of $[k]$ and
some ordering $e_1,e_2,\ldots,e_n$ of $E$ so that,
\begin{itemize}
\item[(S1)] for each $i\in [n]$, the support $s(e_i)$ is an interval
  $[a_i,b_i]$ in the order on $[k]$, and,
\item[(S2)] in that order on $[k]$, we have
  $a_1\leq a_2\leq\cdots\leq a_n$ and $b_1\leq b_2\leq\cdots\leq b_n$.
\end{itemize} 
Such an ordering of $E$ is a \emph{lattice path ordering of $\rho$},
and such an ordering of $\left[ k\right]$ is a \emph{lattice path
  ordering of the support of $\rho$}.
Given sets $Z_1,Z_2,\dots ,Z_{\ell}$ that partition $\left[ k\right]$
such that a lattice path ordering of the support of $\rho$ can be
obtained by taking any ordering of the elements of $Z_1$ followed by
any ordering of $Z_2$ and so on, finally ending with any ordering of
the elements of $Z_{\ell}$, we also refer to $Z_1,Z_2,\dots ,Z_{\ell}$
as a \emph{lattice path ordering of the support of $\rho$}.  For ease
of notation, we also consider any sequence obtained from
$Z_1,Z_2,\dots ,Z_{\ell}$ by adding copies of the empty set to be a
\emph{lattice path ordering of the support of $\rho$}.
  
By equation (\ref{eq:rankbysupport}), knowing the support of each
element $e_i$ in a Boolean polymatroid $\rho$ determines $\rho$.
Different elements in a Boolean polymatroid may have the same support,
so the map taking each element to its support need not be injective.
Also, the support $s(e_i)$ depends on the order in which we write the
sets $A_1,A_2,\ldots,A_k$, and changing the order by a permutation
$\pi$ just replaces each support by its image under $\pi$.  To
determine whether $\rho$ is a lattice path polymatroid, we need to
determine whether a lattice path ordering of $\rho$ exists, and its
existence goes hand in hand with the existence of a compatible lattice
path ordering of the support.

From equation (\ref{eq:decomp}) and the discussion of minors before
it, it follows that, in terms of supports, deleting $e$ from a Boolean
polymatroid corresponds to deleting its support set $s(e)$, while
contracting $e$ from a Boolean polymatroid corresponds to deleting
$s(e)$ and, for each element $f\in E-e$, replacing $s(f)$ by
$s(f)-s(e)$ since contracting a nonloop in a rank-$1$ matroid yields a
rank-$0$ matroid.  It now easily follows that minors of lattice path
polymatroids are lattice path polymatroids, but we get much more: if
$\rho$ is a Boolean polymatroid and $\rho_{\del e}$ is a lattice path
polymatroid, then so is $\rho_{/e}$ since, given a linear order on
$[k]$ in which the support sets for $\rho_{\del e}$ are intervals that
satisfy (S2), for the induced linear order on $[k]-s(e)$, the support
sets for $\rho_{/e}$ are intervals that satisfy (S2).  Thus, we have
the following lemma, which is highly atypical for minor-closed classes
of polymatroids.

\begin{lemma}\label{lem:deletionsonly}
  If $\rho$ is a Boolean polymatroid that is a not a lattice path
  polymatroid, then $\rho$ is an excluded minor for the class of
  lattice path polymatroids if and only if each single-element
  deletion is a lattice path polymatroid.
\end{lemma}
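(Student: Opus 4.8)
The plan is to prove the biconditional by combining the standard characterization of excluded minors with the observation established in the paragraph immediately preceding the statement, namely that for a Boolean polymatroid $\rho$, if $\rho_{\del e}$ is a lattice path polymatroid then so is $\rho_{/e}$.

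For the forward implication, I would argue directly from the definition of an excluded minor: if $\rho$ is an excluded minor for the class, then $\rho$ is not a lattice path polymatroid while every proper minor of $\rho$ is one. In particular, for each $e\in E$ the single-element deletion $\rho_{\del e}$ is a proper minor, hence a lattice path polymatroid. This direction requires nothing beyond unwinding the definition.

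For the reverse implication, suppose $\rho$ is a Boolean polymatroid that is not a lattice path polymatroid and that each single-element deletion $\rho_{\del e}$ is a lattice path polymatroid. The key move is to apply the preceding observation to each $e\in E$: since $\rho_{\del e}$ is a lattice path polymatroid, so is $\rho_{/e}$. Thus every single-element minor of $\rho$ lies in the class. Because the class of lattice path polymatroids is minor-closed and every proper minor of $\rho$ is a minor of some $\rho_{\del e}$ or $\rho_{/e}$, it follows that every proper minor of $\rho$ is a lattice path polymatroid. Together with the hypothesis that $\rho$ itself is not a lattice path polymatroid, this shows that $\rho$ is an excluded minor.

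I do not expect a genuine obstacle here: the substantive content is carried entirely by the preceding observation, which converts the deletion hypothesis into the contraction conclusion and is precisely what makes this lemma \emph{highly atypical} for minor-closed classes. The only care needed is the routine verification that every proper minor of $\rho$ factors through a single-element deletion or contraction, so that minor-closedness of the class may be applied.
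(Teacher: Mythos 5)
Your proposal is correct and follows essentially the same route as the paper: the paper states this lemma as an immediate consequence of the preceding observation that, for a Boolean polymatroid, $\rho_{\del e}$ being a lattice path polymatroid forces $\rho_{/e}$ to be one as well, which is exactly the key step in your reverse implication. Your added verification that every proper minor factors through a single-element deletion or contraction, combined with minor-closedness, is the same (routine) bookkeeping the paper leaves implicit.
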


As noted above, the class of lattice path polymatroids is a
minor-closed subclass of the class of Boolean polymatroids.  The
excluded-minors for the class of Boolean polymatroids were found by
Mat\'u\v{s}.  The next theorem recasts  \cite[Theorem 3]{Matus}.

\begin{thm}\label{matusex}
  The excluded minors for Boolean polymatroids are the polymatroids of
  the form
  \begin{equation}\label{eq:booleanexminform}
    -c_E\,r_{U_{1,E}} +\sum_{X\,:\,\emptyset\neq X\subsetneq E}c_X\,
    r_{U_{1,X}\oplus U_{0,E-X}}
  \end{equation}
  where $|E|\geq 3$, $c_E$ is a positive integer, all coefficients
  $c_X$ are nonnegative integers, and $c_X\geq c_E$ if $|X|=|E|-1$.
\end{thm}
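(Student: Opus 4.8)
The plan is to work throughout with the unique expansion of an integer-valued polymatroid in the ``basis'' of rank-one matroid functions, which turns the whole question into one about an integer vector of coefficients. Writing $r_X:=r_{U_{1,X}\oplus U_{0,E-X}}$, so that $r_X(Y)=1$ exactly when $X\cap Y\neq\emptyset$, I would first record that every $\rho\colon 2^E\to\mathbb{Z}$ with $\rho(\emptyset)=0$ has a unique expression $\rho=\sum_{\emptyset\neq X\subseteq E}c_X\,r_X$ with $c_X\in\mathbb{Z}$; the inversion $c_X=\sum_{U\subseteq X}(-1)^{|X-U|+1}\rho(E-U)$ is exactly the Möbius-inversion underlying Lemma \ref{lem:AisUnique}, and it shows at once that $\rho$ is a Boolean polymatroid precisely when every $c_X\geq 0$. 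Thus the statement recasts Mat\'u\v{s}'s theorem as a condition on the vector $(c_X)$.

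Next I would compute how a single-element minor acts on these coefficients. A direct calculation gives $(r_X)_{/e}=r_X$ when $e\notin X$ and $(r_X)_{/e}=0$ when $e\in X$, while $(r_X)_{\del e}=r_{X-e}$, interpreting $r_\emptyset=0$. Hence the coefficients of $\rho_{/e}$ are just the $c_X$ with $e\notin X$, and the coefficient of a nonempty $X'\subseteq E-e$ in $\rho_{\del e}$ is $c_{X'}+c_{X'\cup e}$. Combining this with the standard fact that, for a minor-closed class, an excluded minor is exactly a polymatroid outside the class all of whose single-element minors lie in it, I can read off the characterization: the contractions $\rho_{/e}$ are all Boolean precisely when $c_X\geq 0$ for every proper nonempty $X$ (each such $X$ avoids some $e$), so non-Booleanness of $\rho$ forces $c_E<0$; and the deletions $\rho_{\del e}$ are all Boolean precisely when $c_{E-e}+c_E\geq 0$ for every $e$, since the only possibly-negative coefficient in $\rho_{\del e}$ is that of $E-e$. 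Writing the coefficient of $r_E$ as $-c_E$ with $c_E>0$, these are exactly the conditions that $c_E$ is a positive integer, all other $c_X$ are nonnegative integers, and $c_X\geq c_E$ when $|X|=|E|-1$.

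The remaining, and I expect hardest, step is to confirm that a function of this form really is a polymatroid, and to locate where the hypothesis $|E|\geq 3$ enters; this is what makes the displayed forms genuine excluded minors rather than formal expressions. Monotonicity is quick: for nonempty $A\subseteq B$ the increment $\rho(B)-\rho(A)$ collects only coefficients $c_X$ with $X\cap A=\emptyset$, which excludes $X=E$ and so is a sum of nonnegative terms, while $\rho(\{f\})\geq(|E|-2)c_E\geq 0$ handles the step up from $\emptyset$. Submodularity is the crux: the defect $\rho(A)+\rho(B)-\rho(A\cup B)-\rho(A\cap B)$ equals $\sum_X c_X\Delta_X$, where $\Delta_X\in\{0,1\}$ is the nonnegative defect of the matroid function $r_X$ and $\Delta_X=1$ exactly when $X$ meets both $A$ and $B$ but not $A\cap B$. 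The only negative contribution, $-c_E$, occurs precisely for disjoint nonempty $A,B$; for such $A,B$ at least $|E|-2$ of the corank-one sets $E-e$ meet both, so the nonnegative corank-one terms contribute at least $(|E|-2)c_E$, which dominates $c_E$ exactly when $|E|\geq 3$. The same computation shows submodularity fails for $|E|=2$, which both explains the bound and, in the converse direction, rules out $|E|\leq 2$: an excluded minor must be a polymatroid with $c_E<0$ and the two families of inequalities above, so every excluded minor has the stated shape.
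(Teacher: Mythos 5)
Your proposal cannot be checked against the paper's own argument, because the paper has none: Theorem~\ref{matusex} is imported without proof as a recasting of \cite[Theorem 3]{Matus}. Judged on its own merits, your proof is correct and self-contained. The starting point---that every normalized integer-valued function on $2^E$ has a unique integer expansion $\rho=\sum_{\emptyset\neq X\subseteq E}c_X\,r_X$ in the functions $r_X=r_{U_{1,X}\oplus U_{0,E-X}}$, with the M\"obius inversion you state---is right, and your criterion that $\rho$ is Boolean exactly when every $c_X\geq 0$ is precisely the paper's Theorem~\ref{thm:booleaninequalties} in this language (substituting $Y=E-U$ there, the displayed sum equals $c_{E-X}$), as well as the content of Lemma~\ref{lem:AisUnique}. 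Your minor calculus is also correct: $(r_X)_{/e}$ is $r_X$ or $0$ according as $e\notin X$ or $e\in X$, and $(r_X)_{\del e}=r_{X-e}$, so contractions preserve the coefficients $c_X$ with $e\notin X$ while deletions produce $c_{X'}+c_{X'\cup e}$; with the standard single-element-minor characterization of excluded minors this correctly forces $c_E<0$, $c_X\geq 0$ for proper $X$, and $c_{E-e}+c_E\geq 0$. You are also right that the crux is verifying that the displayed form is a polymatroid, and your submodularity argument is sound: the defect is $\sum_X c_X\Delta_X$ with $\Delta_X\in\{0,1\}$, the lone negative term $-c_E$ occurs only for disjoint nonempty $A,B$, and then at least $|E|-2$ of the sets $E-e$ have $\Delta_{E-e}=1$ with $c_{E-e}\geq c_E$, which is exactly where $|E|\geq 3$ is used; the same computation disposes of $|E|=2$. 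The only (trivial) loose end is $|E|\leq 1$: for $|E|=1$ non-Booleanness would force $\rho(e)=c_E<0$, contradicting monotonicity, and for $|E|=0$ the polymatroid is Boolean. In short, you have reconstructed a full proof of a theorem the paper merely cites, and what your route buys is the observation that Mat\'u\v{s}'s theorem follows from exactly the coefficient machinery (Lemma~\ref{lem:AisUnique}, Theorem~\ref{thm:booleaninequalties}) that the paper already develops for other purposes.
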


Thus, there are infinitely many excluded minors for Boolean
polymatroids.  To translate this result into an infinite list of the
excluded minors, one must take isomorphism into account since acting
on $2^E$ with a permutation of $E$ yields isomorphic excluded minors
that all have the form in (\ref{eq:booleanexminform}).  It follows
from Theorem \ref{matusex} that each Boolean polymatroid $\rho'$ on
$E'$ is a minor of infinitely many of these excluded minors: take any
superset $E$ of $E'$ with $|E-E'|\geq 2$ and let $\rho$ be as in
(\ref{eq:booleanexminform}) where, for $X\subseteq E'$, the
coefficient $c_X$ is the number of copies of
$r_{U_{1,X}\oplus U_{0,E'-X} }$ when $\rho'$ is written as in equation
(\ref{eq:decomp}); then $\rho_{/E-E'}=\rho'$.  Similarly, one can
construct excluded minors for Boolean polymatroids that have $\rho'$
as a deletion.  Given a characterization of a proper minor-closed
class of Boolean polymatroids by its excluded minors relative to the
class of Boolean polymatroids, there is an excluded-minor
characterization of the class relative to the class of all
polymatroids; however, the excluded minors relative to the class of
Boolean polymatroids may, in some cases, be more illuminating.  Of
course, each Boolean polymatroid that is an excluded minor relative to
the class of Boolean polymatroids is also an excluded minor relative
to the class of all polymatroids.

Besides using the excluded minors, there is another way
to determine whether a polymatroid is Boolean.  The next result is
part of \cite[Lemma 1]{Matus}.

\begin{thm}\label{thm:booleaninequalties}
  A polymatroid $\rho$ on $E$ is Boolean if and only if, for all
  $X\subseteq E$, 
  $$\sum_{Y\,:\,X\subseteq Y\subseteq
    E}(-1)^{|Y-X|}\bigl(\rho(E)-\rho(Y)\bigr)\geq 0.$$
\end{thm}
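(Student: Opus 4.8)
The plan is to read both implications off the inclusion--exclusion identity already established in Lemma \ref{lem:AisUnique}. The first observation I would make is that, after discarding any empty sets $A_i$ from a Boolean representation as in equation (\ref{eq:decomp}), the support equals $[k]$ and $k=\rho(E)$; hence the quantity $\rho(E)-\rho(Y)$ appearing in the statement may be read as $k-\rho(Y)$, which is exactly the form occurring in Lemma \ref{lem:AisUnique}.

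For the forward implication, suppose $\rho$ is Boolean, written as in (\ref{eq:decomp}) with all $A_i$ nonempty, so that $\rho(E)=k$. Fix $X\subseteq E$. When $X=E$ the displayed sum collapses to $\rho(E)-\rho(E)=0$. When $X\subsetneq E$, I would use $\sum_{Y\,:\,X\subseteq Y\subseteq E}(-1)^{|Y-X|}=0$ to replace $\rho(E)$ by $k$ in the sum without changing its value; Lemma \ref{lem:AisUnique} then identifies it as the multiplicity of $E-X$ in the list $A_1,\dots,A_k$, which is a nonnegative integer. This settles the ``only if'' direction.

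For the converse, assume the inequalities hold. For each nonempty $Z\subseteq E$ I would set $X=E-Z$ and let $c_Z$ denote the corresponding sum in the statement; by hypothesis $c_Z\ge 0$, and $c_Z\in\mathbb{Z}$ since $\rho$ is integer-valued. Let $\rho'$ be the Boolean polymatroid built via (\ref{eq:bpdef}) from $c_Z$ copies of each nonempty set $Z$. To prove $\rho'=\rho$, I would reindex the sum defining $c_Z$ by $Y\mapsto E-Y$, turning it into $c_Z=\sum_{Z'\subseteq Z}(-1)^{|Z-Z'|}g(Z')$ with $g(Z')=\rho(E)-\rho(E-Z')$; Möbius inversion on the Boolean lattice $2^E$ then yields $\sum_{Z\subseteq U}c_Z=g(U)$ for every $U\subseteq E$, so taking $U=E-X$ gives $\sum_{Z\subseteq E-X}c_Z=\rho(E)-\rho(X)$. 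Counting directly in $\rho'$, the number of indices $i$ with $A_i\cap X=\emptyset$ is $\sum_{Z\subseteq E-X}c_Z$, whence $\rho'(X)=\rho'(E)-\bigl(\rho(E)-\rho(X)\bigr)=\rho(X)$, using $\rho'(E)=\rho(E)$ (the case $X=\emptyset$, where $U=E$ and $\rho(\emptyset)=0$). Thus $\rho=\rho'$ is Boolean.

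The only genuinely delicate part will be the bookkeeping in the converse: keeping the two dualities $Z\leftrightarrow E-X$ and $Z'\leftrightarrow E-Y$ aligned so that the defining alternating sum becomes a bona fide Möbius inversion over $2^E$, and confirming that the resulting nonnegative integers $c_Z$ reconstruct $\rho$ itself rather than merely some Boolean polymatroid. Everything else follows immediately from Lemma \ref{lem:AisUnique}.
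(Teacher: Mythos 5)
Your proof is correct, but note that there is nothing in the paper to compare it against: the paper does not prove this theorem, it simply quotes it as part of Lemma 1 of Mat\'u\v{s} \cite{Matus}. Your argument supplies the missing proof, and it is exactly the route hinted at by the paper's remark that the proof of Lemma \ref{lem:AisUnique} is ``closely related'' to Mat\'u\v{s}'s characterization. The forward direction is indeed immediate from the displayed identity in the proof of Lemma \ref{lem:AisUnique}: after discarding empty $A_i$ one has $k=\rho(E)$, and the alternating sum counts the multiplicity of $E-X$ in the list $A_1,\dots,A_k$, hence is nonnegative. Your converse also checks out: with $c_Z\geq 0$ defined as the sum at $X=E-Z$ (and $c_\emptyset=0$, which one should note explicitly so that the inversion runs over all of $2^E$), the substitution $Z'=E-Y$ gives $c_Z=\sum_{Z'\subseteq Z}(-1)^{|Z-Z'|}g(Z')$ with $g(Z')=\rho(E)-\rho(E-Z')$, M\"obius inversion yields $\sum_{Z\subseteq E-X}c_Z=\rho(E)-\rho(X)$, and since the Boolean polymatroid $\rho'$ built from $c_Z$ copies of each nonempty $Z$ satisfies $\rho'(E)=\sum_{Z}c_Z=\rho(E)$ and $\rho'(X)=\rho'(E)-\sum_{Z\subseteq E-X}c_Z$, you get $\rho'=\rho$. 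One small observation worth recording: your converse uses only that $\rho$ is integer-valued and normalized; monotonicity and submodularity are never invoked, so the argument actually shows that any normalized integer-valued set function satisfying the inequalities is Boolean, which is in the spirit of Mat\'u\v{s}'s original lemma.
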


Given these factors, it is reasonable to seek characterizations of
proper minor-closed classes of Boolean polymatroids by their excluded
minors relative to the class of Boolean polymatroids.  This is exactly
what we do for lattice path polymatroids.

\section{Boolean cycles and a property of lattice path
  orderings}\label{sec:cycles}

We start with a necessary condition for an ordering on the ground set
$E$ of a Boolean polymatroid $\rho$ to be a lattice path ordering of
$E$, and so for $\rho$ to be lattice path.

\begin{lemma}\label{lem:S3}
  Let $\rho$ be a Boolean polymatroid on $E$.  If $\rho$ is a lattice
  path polymatroid on $E$ and $e_1,e_2,\dots ,e_n$ is a lattice path
  ordering of $\rho$, then the following property holds:
  \begin{itemize}
  \item[\emph{(S3)}] if 
     $S\ne\emptyset$ and $S\subseteq s(e_h)$ for
    at least one $h\in [n]$, then the elements whose supports contain 
    $S$ are $e_i,e_{i+1},\dots ,e_{j-1}$, and $e_j$ for some $i$ and
    $j$ with $1\leq i\leq j\leq n$.
  \end{itemize}
\end{lemma}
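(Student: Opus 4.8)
The plan is to exploit properties (S1) and (S2) directly, reducing the containment of $S$ to a pair of inequalities on the endpoints of the support intervals. First I would set $\alpha$ to be the least element of $S$ and $\beta$ the greatest element of $S$ in the fixed order on $[k]$; since $S\neq\emptyset$, both exist. By (S1), each support $s(e_i)$ is an interval $[a_i,b_i]$ in the order on $[k]$, so for intervals in a linear order we have $S\subseteq s(e_i)$ if and only if $a_i\leq\alpha$ and $\beta\leq b_i$.

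Next I would invoke the monotonicity in (S2). Since $a_1\leq a_2\leq\cdots\leq a_n$, the set of indices $i$ with $a_i\leq\alpha$ is downward closed, hence of the form $\{1,2,\dots,p\}$ for some $p$. Similarly, since $b_1\leq b_2\leq\cdots\leq b_n$, the set of indices $i$ with $\beta\leq b_i$ is upward closed, hence of the form $\{q,q+1,\dots,n\}$ for some $q$. The elements whose supports contain $S$ are therefore exactly those $e_i$ with $i$ in the intersection $\{1,\dots,p\}\cap\{q,\dots,n\}$, which is the block of consecutive indices $\{q,q+1,\dots,p\}$. The hypothesis that $S\subseteq s(e_h)$ for at least one $h\in[n]$ guarantees that this intersection is nonempty, so $q\leq p$; taking $i=q$ and $j=p$ then yields exactly the conclusion of (S3).

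I do not anticipate a serious obstacle: the argument is essentially the observation that an interval $[\alpha,\beta]$ sits inside $[a_i,b_i]$ precisely when the left endpoint is small enough and the right endpoint is large enough, and that each of these two conditions, by (S2), cuts out a prefix (respectively a suffix) of the index set, whose intersection is an interval. The one point requiring a little care is the reduction in the first step, namely that $S\subseteq[a_i,b_i]$ is equivalent to $[\alpha,\beta]\subseteq[a_i,b_i]$, and hence to $a_i\leq\alpha$ and $\beta\leq b_i$; this uses that $\alpha$ and $\beta$ are the extreme elements of $S$ in the order on $[k]$, so that an element lies in the interval $[a_i,b_i]$ exactly when it lies between these two extremes.
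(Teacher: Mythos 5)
Your proof is correct and takes essentially the same approach as the paper's: both reduce (S3) to the monotonicity of the interval endpoints $a_i$ and $b_i$ guaranteed by (S2). The paper phrases the argument by taking the least index $i$ and greatest index $j$ with $S\subseteq s(e_h)$ and showing $S\subseteq s(e_i)\cap s(e_j)=[a_j,b_i]\subseteq s(e_h)$ for all $i\leq h\leq j$, whereas you characterize containment by the threshold conditions $a_i\leq\alpha$ and $\beta\leq b_i$ (with $\alpha,\beta$ the extremes of $S$), whose solution sets are a prefix and a suffix of the index set; these are the same observation organized slightly differently.
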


\begin{proof}
  By relabeling if needed, we can take the usual order $1,2,\ldots,k$
  on the support of $\rho$ and let $s(e_t)$ be the interval
  $[a_t,b_t]$ in $[k]$.  Let $i\in[k]$ be least with
  $S\subseteq s(e_i)$, and let $j\in[k]$ be greatest with
  $S\subseteq s(e_j)$.  The result holds if $i=j$, so assume that
  $i<j$.  Now $S\subseteq s(e_i)\cap s(e_j)=[a_j,b_i]$.  Since
  $a_i\leq a_{i+1}\leq \cdots \leq a_j$ and
  $b_i\leq b_{i+1}\leq \cdots \leq b_j$, we have
  $[a_j,b_i]\subseteq s(e_h)$ for all $h$ with $i\leq h\leq j$, as
  needed.
\end{proof}

Recall that, by equation (\ref{eq:rankbysupport}), a Boolean
polymatroid can be given by its supports.  That is how we will define
each of the excluded minors for lattice path polymatroids.

We first treat a family of excluded minors that, for each integer
$n\geq 3$, contains infinitely many polymatroids with $n$ elements.
We call a polymatroid in this family a \emph{Boolean cycle}, or a
\emph{Boolean $n$-cycle}, where $n$ is the number of elements.  Let
$n\geq 3$.  The supports of the elements $e_1,e_2,\ldots,e_n$ of a
\emph{Boolean $n$-cycle} have the form
$s(e_i)=Z_{2i-1}\cup Z_{2i}\cup Z_{2i+1}$, where the indices are taken
modulo $2n$ (thus, $s(e_n)=Z_{2n-1}\cup Z_{2n}\cup Z_1$), and
\begin{itemize}[leftmargin=*]
\item $Z_1,Z_2,\ldots,Z_{2n}$ are pairwise disjoint, and
\item for all $i\in [n]$, the set $Z_{2i-1}$ is nonempty.
\end{itemize}
The sets $Z_{2i}$ may be empty.  The example of smallest rank and size
is three coplanar lines, that is, $e_1$, $e_2$, $e_3$ each have rank
$2$ and any set of two or three of them has rank $3$.  As with the
description of the excluded minors for Boolean polymatroids in Theorem
\ref{matusex}, the definition of Boolean $n$-circuits on $E$ yields
many isomorphic polymatroids; we are, of course, interested in these
polymatroids up to isomorphism.

We now prove that each Boolean $n$-cycle is an excluded minor.

\begin{lemma}\label{cycles}
  Let $\rho$ be a Boolean $n$-cycle for some integer $n\geq 3$.  Then
  $\rho$ is an excluded minor for the class of lattice path
  polymatroids.
\end{lemma}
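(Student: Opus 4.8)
The plan is to invoke Lemma~\ref{lem:deletionsonly}: since $\rho$ is a Boolean polymatroid, it will be an excluded minor as soon as we show (i)~$\rho$ is not a lattice path polymatroid, and (ii)~each single-element deletion of $\rho$ is a lattice path polymatroid.

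For (i) I would argue via property (S3) of Lemma~\ref{lem:S3}. The crucial point is that each odd-indexed block $Z_{2i+1}$ is a nonempty overlap shared by exactly two consecutive elements. Indeed, because $Z_1,\dots,Z_{2n}$ are pairwise disjoint and $Z_{2i+1}\neq\emptyset$, one has $Z_{2i+1}\subseteq s(e_m)$ if and only if $2i+1\in\{2m-1,2m,2m+1\}$ modulo $2n$; since $2i+1$ is odd this forces $m\in\{i,i+1\}$ modulo $n$. Hence the set of elements whose support contains $Z_{2i+1}$ is exactly $\{e_i,e_{i+1}\}$. Now suppose for contradiction that $\rho$ is a lattice path polymatroid, and let $f_1,f_2,\dots,f_n$ be a lattice path ordering of $\rho$ (a reindexing of $e_1,\dots,e_n$). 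Applying (S3) with $S=Z_{2i+1}$, the two-element set $\{e_i,e_{i+1}\}$ must be a consecutive run in $f_1,\dots,f_n$, so $e_i$ and $e_{i+1}$ are adjacent in that order. This must hold for every $i\in[n]$ (indices modulo $n$), giving $n$ distinct required adjacencies, namely the edges of the $n$-cycle on $\{e_1,\dots,e_n\}$. But a linear order on $n$ elements has only $n-1$ consecutive pairs, a contradiction for $n\geq 3$. Thus $\rho$ is not a lattice path polymatroid.

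For (ii), note first that the definition of a Boolean $n$-cycle is invariant under cyclically reindexing the elements and blocks: replacing $e_i$ by $e_{i+\ell-1}$ and $Z_j$ by $Z_{j+2(\ell-1)}$ (indices modulo $n$ and modulo $2n$) presents the very same polymatroid as a Boolean $n$-cycle in which the element to be deleted is the first one. Hence it suffices to show $\rho_{\del e_1}$ is a lattice path polymatroid. Deleting $e_1$ removes $s(e_1)=Z_1\cup Z_2\cup Z_3$ from the list of supports, leaving $s(e_2),\dots,s(e_n)$; the block $Z_2$ then appears in no support and drops out, while the overlaps $Z_5,Z_7,\dots,Z_{2n-1}$ still link consecutive elements, so the cyclic pattern becomes a path. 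I would exhibit the explicit lattice path ordering given by the element order $e_2,e_3,\dots,e_n$ together with the order on the support obtained by listing the (remaining) blocks as $Z_3,Z_4,Z_5,\dots,Z_{2n-1},Z_{2n},Z_1$, ignoring empty blocks as permitted. In this block order each support $s(e_m)=Z_{2m-1}\cup Z_{2m}\cup Z_{2m+1}$ (with $s(e_n)=Z_{2n-1}\cup Z_{2n}\cup Z_1$) is a union of three consecutive blocks, hence an interval, so (S1) holds; and as $m$ runs from $2$ to $n$ both the left and right endpoints of these intervals are nondecreasing, so (S2) holds. Therefore $\rho_{\del e_1}$, and hence every single-element deletion, is a lattice path polymatroid.

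Combining (i) and (ii) with Lemma~\ref{lem:deletionsonly} yields that $\rho$ is an excluded minor. The main obstacle is step~(i): one must first pin down that each overlap $Z_{2i+1}$ is contained in exactly the two supports $s(e_i)$ and $s(e_{i+1})$, and then translate property (S3) into the clean combinatorial assertion that $n$ cyclically-arranged adjacencies cannot all be realized in a single linear order. Once that reduction is in place, the impossibility is a short counting argument (or, equivalently, the observation that the acyclic path of consecutive pairs cannot contain the cycle $C_n$), and step~(ii) is a direct verification of (S1) and (S2) for the displayed ordering.
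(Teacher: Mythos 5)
Your proof is correct and follows essentially the same route as the paper: both invoke Lemma~\ref{lem:deletionsonly}, use property (S3) of Lemma~\ref{lem:S3} applied to the nonempty odd blocks (each contained in exactly two consecutive supports) to force adjacencies, and exhibit the cyclic-shift ordering to show every single-element deletion is lattice path. The only cosmetic difference is the final contradiction in step (i): the paper pins the ordering down to $e_1,e_2,\dots,e_n$ using $n-1$ of these adjacencies and then observes that $Z_1$ violates (S3), whereas you count that all $n$ required adjacencies cannot fit among the $n-1$ consecutive pairs of a linear order.
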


\begin{proof}
  Suppose $\rho$ is lattice path.  Since the supports of only two
  elements contain $Z_3$, namely $e_1$ and $e_2$, these elements are
  consecutive in the lattice path ordering of $\rho$.  Likewise, since
  $Z_{2i-1}$ is only contained in $s(e_{i-1})$ and $s(e_i)$ for all
  $i\in\{3,4,\dots ,n\}$, the ordering on the elements of $\rho$ must
  be $e_1,e_2,\dots ,e_n$.  However $s(e_1)\cap s(e_n)= Z_1$ and
  $Z_1\nsubseteq s(e_2)$, so (S3) fails, and so $\rho$ is not lattice
  path.

  Take $i\in [n]$.  Then
  $e_{i+1},e_{i+2},\dots ,e_n,e_1,e_2,\dots ,e_{i-1}$ is a lattice
  path ordering of $\rho_{\ba e_i}$.  Thus, $\rho$ is an excluded
  minor for the class of lattice path polymatroids by
  Lemma~\ref{lem:deletionsonly}.
\end{proof}

\section{Excluded minors with three or four elements}\label{sec:34}

In this section, we define the types of excluded minors that are not
Boolean cycles.  Each of these types has three or four elements.  We
also prove that our collection of three-element excluded minors is
complete.

We first note that each polymatroid on a set of at most two elements
is a lattice path polymatroid; in particular, it is a Boolean
polymatroid.  This is because, for a polymatroid $\rho$ on $E=\{e,f\}$
with $k=\rho(E)$, we can let $s(e)$ be the first $\rho(e)$ elements,
and $s(f)$ the last $\rho(f)$ elements, of $[k]$.

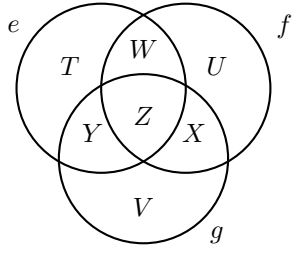
\begin{figure}
\begin{tikzpicture}[scale=0.75]
  \draw[thick] (2.25,3.5) circle (1.5);
  \draw[thick] (3.75,3.5) circle (1.5);
  \draw[thick] (3,2.25) circle (1.5);
  \node () at (1.7,3.9) {$T$};
\node () at (4.3,3.9) {$U$};
\node () at (3,1.4) {$V$};
\node () at (5.5,4.6) {$f$};
\node () at (0.7,4.6) {$e$};
\node () at (4.3,0.9) {$g$};
\node () at (3,3) {$Z$};
\node () at (3,4.2) {$W$};
\node () at (2.1,2.7) {$Y$};
\node () at (3.9,2.7) {$X$};
\end{tikzpicture}

\caption{Venn diagram showing the supports of elements $e$, $f$, and
  $g$ in a Boolean polymatroid, $\rho$.}
\label{3venn}
\end{figure}

We turn to the Boolean excluded minors that have three elements and
are not Boolean $3$-cycles.  They come in types $T_1$, $T_2$, $T_3$,
and $T_4$, and there are infinitely many of each type.  In order to
characterize these four types of Boolean polymatroids, say with
elements $e$, $f$, and $g$, it suffices to identify which of the areas
in the Venn diagram of their supports in Figure~\ref{3venn} are empty,
and which are nonempty.  Note that if $Z=\emptyset$ and each of $W$,
$X$, and $Y$ is nonempty, then $\rho$ is a Boolean $3$-cycle,
independent of whether or not $T$, $U$, or $V$ are empty.

In the following list, we assume that $\rho$ is a Boolean polymatroid
and $E(\rho)=\{e,f,g\}$, where the supports of $e$, $f$, and $g$ are
as shown in Figure~\ref{3venn}.  We view elements of a rank-$k$
Boolean polymatroid in terms of their supports, which are subsets of
$[k]$ and so can be identified with faces of a simplex with $k$
vertices, so the examples that we provide are given as sets of faces
of a simplex.
\begin{itemize}
\item[($T_1$)] The polymatroid $\rho$ is type $T_1$ if
  \begin{itemize}[leftmargin=*]
  \item[$\bullet$] $T$, $U$, $V$, and $Z$ are all nonempty.
  \end{itemize}
\item[$\circ$] Example: three lines in rank four that, on the simplex,
  share a vertex (the vertex does not correspond to an element of the
  polymatroid).
\item[($T_2$)] The polymatroid $\rho$ is type $T_2$ if
  \begin{itemize}[leftmargin=*]
  \item[$\bullet$] $W$, $X$, $Y$, and $Z$ are all nonempty and
  \item[$\bullet$] at least one of $T$, $U$, or $V$ is empty.
  \end{itemize}
\item[$\circ$] An example:  three planes in the four-vertex
  simplex.
\item[($T_3$)] The polymatroid $\rho$ is type
  $T_3$ if
  \begin{itemize}[leftmargin=*]
  \item[$\bullet$] $T$, $U$, $W$, and $Z$ are all nonempty and
  \item[$\bullet$] $V=X=Y=\emptyset$.
  \end{itemize}
\item[$\circ$] Example: two planes of the four-vertex simplex, along
  with one of the two points on the line that the planes share on the
  simplex.
\item[($T_4$)] The polymatroid $\rho$ is type $T_4$ if
  \begin{itemize}[leftmargin=*]
  \item[$\bullet$] $T$, $W$, $Y$, and $Z$ are all nonempty and
  \item[$\bullet$] $X=V=\emptyset$.
  \end{itemize}
\item[$\circ$] The set $U$ may or may not be empty. Example with
  $U=\emptyset$: an element of rank four and two lines on the
  corresponding simplex that share one vertex (the vertex is not in
  the polymatroid).  Example with $U\ne\emptyset$: start with a
  similar configuration in a four-vertex simplex, but embedded in a
  five-vertex simplex, with one of the lines extended to a plane that
  includes the new vertex.
\end{itemize}

\begin{figure}
\begin{tikzpicture}[scale=0.75]
  \draw[thick] (2.25,3.5) circle (1.5);
  \draw[thick] (3.75,3.5) circle (1.5);
  \draw[thick] (3,2.25) circle (1.5);
  \node () at (1.7,3.9) {$T$};
  \node () at (1.2,4.1) {$\bullet$};
  \node () at (4.3,3.9) {$U$};
  \node () at (4.7,4.1) {$\bullet$};
  \node () at (3,1.5) {$V$};
  \node () at (3,1.1) {$\bullet$};
  \node () at (5.5,4.6) {$f$};
  \node () at (0.7,4.6) {$e$};
  \node () at (4.3,0.9) {$g$};
  \node () at (3,3.2) {$Z$};
  \node () at (3,2.6) {$\bullet$};
  \node () at (3,4.2) {$W$};
  \node () at (2.1,2.7) {$Y$};
  \node () at (3.9,2.7) {$X$};
  \node () at (2.2,0.3) {$T_1$};

\begin{scope}
\clip (10.5,2.25) circle (1.5);
\fill[color= black!30] (8.2,0.75) rectangle (12.75,5); 
\end{scope}
 \begin{scope}
 \clip (9.75,3.5) circle (1.5);
 \fill[color= white] (8.2,0.75) rectangle (12.75,5); 
\end{scope}
 \begin{scope}
 \clip (11.25,3.5) circle (1.5);
 \fill[color= white] (8.2,0.75) rectangle (12.75,5); 
 \end{scope}
  \draw[thick] (9.75,3.5) circle (1.5);
  \draw[thick] (11.25,3.5) circle (1.5);
  \draw[thick] (10.5,2.25) circle (1.5);
\node () at (9.2,3.9) {$T$};
\node () at (11.8,3.9) {$U$};
\node () at (10.5,1.4) {$V$};
\node () at (13,4.6) {$f$};
\node () at (8.2,4.6) {$e$};
\node () at (11.8,0.9) {$g$};
\node () at (10.5,3.2) {$Z$};
\node () at (10.5,2.6) {$\bullet$};
\node () at (10.5,4.1) {$W$};
\node () at (10.5,4.5) {$\bullet$};
\node () at (9.6,2.7) {$Y$};
\node () at (9.3,2.3) {$\bullet$};
\node () at (11.4,2.7) {$X$};
\node () at (11.7,2.3) {$\bullet$};
 \node () at (9.7,0.3) {$T_2$};
\end{tikzpicture}

\vspace{20pt}

\begin{tikzpicture}[scale=0.75]
\begin{scope}
\clip (3,2.25) circle (1.5);
\fill[color= black!30] (0.75,0.75) rectangle (5.25,5); 
\end{scope}
 \begin{scope}
 \clip (3.75, 3.5) circle (1.5);
 \clip (2.25, 3.5) circle (1.5);
 \fill[color= white]  (0.75,0.75) rectangle (5.25,5); 
 \end{scope}
  \draw[thick] (2.25,3.5) circle (1.5);
  \draw[thick] (3.75,3.5) circle (1.5);
  \draw[thick] (3,2.25) circle (1.5);
  \node () at (1.7,3.9) {$T$};
  \node () at (1.2,4.1) {$\bullet$};
\node () at (4.3,3.9) {$U$};
  \node () at (4.7,4.1) {$\bullet$};
\node () at (3,1.4) {$V$};
\node () at (5.5,4.6) {$f$};
\node () at (0.7,4.6) {$e$};
\node () at (4.3,0.9) {$g$};
\node () at (3,3.2) {$Z$};
\node () at (3,2.6) {$\bullet$};
\node () at (3,4.2) {$W$};
\node () at (3,4.5) {$\bullet$};
\node () at (2.1,2.7) {$Y$};
\node () at (3.9,2.7) {$X$};
 \node () at (2.2,0.3) {$T_3$};

\begin{scope}
\clip (10.5,2.25) circle (1.5);
\fill[color= black!30] (8.2,0.75) rectangle (12.75,5); 
\end{scope}
 \begin{scope}
 \clip (9.75,3.5) circle (1.5);
 \clip (10.5,2.25) circle (1.5);
 \fill[color= white] (8.2,0.75) rectangle (12.75,5); 
\end{scope}
  \draw[thick] (9.75,3.5) circle (1.5);
  \draw[thick] (11.25,3.5) circle (1.5);
  \draw[thick] (10.5,2.25) circle (1.5);
\node () at (9.2,3.9) {$T$};
  \node () at (8.7,4.1) {$\bullet$};
\node () at (11.8,3.9) {$U$};
\node () at (10.5,1.4) {$V$};
\node () at (13,4.6) {$f$};
\node () at (8.2,4.6) {$e$};
\node () at (11.8,0.9) {$g$};
\node () at (10.5,3.2) {$Z$};
\node () at (10.5,2.6) {$\bullet$};
\node () at (10.5,4.1) {$W$};
\node () at (10.5,4.5) {$\bullet$};
\node () at (9.6,2.7) {$Y$};
\node () at (9.3,2.3) {$\bullet$};
\node () at (11.4,2.7) {$X$};
 \node () at (9.7,0.3) {$T_4$};
\end{tikzpicture}
  
\caption{Venn diagrams showing the supports of elements $e$, $f$, and $g$
  in Boolean polymatroids of types $T_1$, $T_2$, $T_3$, and $T_4$.  Areas
  shaded gray indicate that those sets are empty.  A point in an area
  indicates that that set is nonempty.
  The other sets may or may not be empty.  }
\label{3venn_ex}
\end{figure}
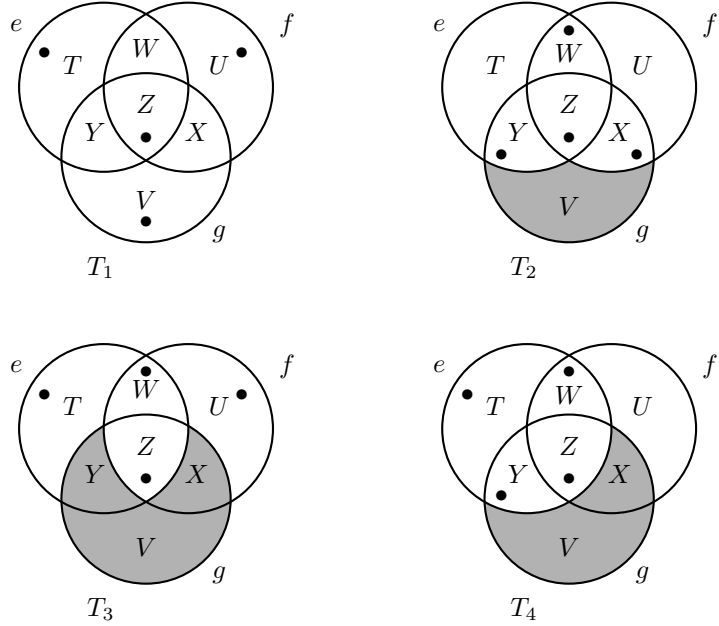

  Using equation (\ref{eq:rankbysupport}), it is easy to
  translate these descriptions into the values of $\rho$.  For
  instance, a polymatroid $\rho$ on $E=\{e,f,g\}$ is type $T_1$ if and
  only if there are positive integers $z$, $t$, $u$, and $v$, and
  nonnegative integers $w$, $x$, and $y$, for which
  $$\rho(e) = z+t+w+y, \qquad\rho(f) = z+u+w+x, \qquad \rho(g) =
  z+v+x+y,$$
  $$\rho(e,f) = z+t+u+w+x+y, \quad\qquad\rho(e,g) = z+t+v+w+x+y,$$
  $$
  \rho(f,g) = z+u+v+w+x+y, \qquad \rho(E)=z+t+u+v+w+x+y.$$ 

We next show that the polymatroids of these four types, along with
Boolean $3$-cycles, are the $3$-element Boolean excluded minors for
lattice path polymatroids.

\begin{lemma}\label{3element}
  A Boolean polymatroid $\rho$ is an excluded minor for the class of
  lattice path polymatroids, where $|E(\rho)|=3$, if and only if
  $\rho$ is type $T_1$, $T_2$, $T_3$, or $T_4$ or $\rho$ is a Boolean
  $3$-cycle.
\end{lemma}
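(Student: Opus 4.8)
The plan is to prove the two directions separately, using Lemma~\ref{lem:deletionsonly} to reduce the hard direction to checking single-element deletions. The forward direction (that $T_1$--$T_4$ and Boolean $3$-cycles are excluded minors) is the more computational part; the converse (that these five families exhaust the $3$-element excluded minors) requires a case analysis over which of the seven regions $T,U,V,W,X,Y,Z$ in Figure~\ref{3venn} are empty. Boolean $3$-cycles are already handled by Lemma~\ref{cycles}, so I may focus on types $T_1$--$T_4$ for the forward direction and then organize the converse as an exhaustive-but-structured sweep.

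For the forward direction, I would verify for each of $T_1,T_2,T_3,T_4$ that (i) $\rho$ itself is not a lattice path polymatroid, and (ii) every single-element deletion $\rho_{\del e}$, $\rho_{\del f}$, $\rho_{\del g}$ is a lattice path polymatroid; then Lemma~\ref{lem:deletionsonly} finishes. For (ii), note that any polymatroid on two elements is lattice path (as observed just before the statement), so every single-element deletion is automatically a lattice path polymatroid. Thus the entire forward direction reduces to showing (i): none of $T_1$--$T_4$ admits a lattice path ordering. Here I would use property (S3) from Lemma~\ref{lem:S3} as the obstruction. In each type the nonempty regions force incompatible interval constraints: for instance, in $T_1$ the common region $Z$ lies in all three supports while $T$, $U$, $V$ each lie in exactly one, so $S=Z$ forces all three elements to be consecutive, yet the pairwise-exclusive parts prevent any linear order from making all supports intervals satisfying (S2). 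I would spell out for each type a specific choice of $S$ (a nonempty region) whose ``owners'' violate (S3), or directly argue no ordering of $\{e,f,g\}$ can make the three supports nested/interval-compatible.

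For the converse, suppose $\rho$ is a $3$-element Boolean excluded minor that is \emph{not} a Boolean $3$-cycle. Again every proper deletion is automatically lattice path, so the only condition to analyze is that $\rho$ itself is \emph{not} lattice path; I must show this forces $\rho$ into one of $T_1$--$T_4$. The natural approach is: $\rho$ is lattice path if and only if some ordering of $\{e,f,g\}$ satisfies (S1)--(S2), equivalently the three supports can be arranged so that they form a ``staircase'' of intervals. I would translate non-lattice-path-ness into a statement about the $2^3$ Venn regions and then enumerate. The key simplification is that an ordering works precisely when the three supports are pairwise comparable-enough to nest as intervals; failure happens only in a few combinatorial patterns of which regions are occupied. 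I expect the main obstacle to be organizing this enumeration cleanly: there are many a priori combinations of empty/nonempty regions, and I must show each non-lattice-path one falls into exactly one of the five families (the four types plus the $3$-cycle) while each lattice-path one is excluded. To tame this, I would first isolate necessary conditions for $\rho$ \emph{not} to be lattice path (e.g., that the ``central'' region $Z$ and certain outer regions interact so no staircase exists), reducing the live cases to a short list that matches the defining conditions of $T_1$--$T_4$ verbatim. The bookkeeping of these cases, rather than any single deep idea, is where the real work lies.
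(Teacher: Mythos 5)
Your proposal is correct and takes essentially the same route as the paper: the forward direction reduces, exactly as in the paper, to checking that each type is not lattice path (two-element deletions are automatically lattice path, so Lemma~\ref{lem:deletionsonly} applies, with Boolean $3$-cycles already covered by Lemma~\ref{cycles}), and the converse is the same exhaustive case analysis on which regions of the Venn diagram in Figure~\ref{3venn} are empty, producing an explicit lattice path ordering of the support whenever the pattern fails to match one of the five families. The only difference is that the paper actually carries out that bookkeeping as a short nested analysis (on $Z$, then $W$, $X$, $T$, $Y$, $U$), whereas you defer it as future work; executed, your plan is the paper's proof.
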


\begin{proof}
  It is straightforward to check that if $\rho$ is type $T_1$, $T_2$,
  $T_3$, or $T_4$ or a Boolean $3$-cycle, then $\rho$ is not lattice
  path.  Since every two-element polymatroid is lattice path, $\rho$
  is therefore an excluded minor for the class of lattice path
  polymatroids.

  Assume then that $\rho$ is a three-element excluded minor for the
  class of lattice path polymatroids.  Let $E=\{e,f,g\}$ where the
  supports of these elements are shown in Figure~\ref{3venn}.  Suppose
  first that $Z=\emptyset$.  If $W=\emptyset$, then $T,Y,V,X,U$ is a
  lattice path ordering of the support of $\rho$, which is a
  contradiction.  By symmetry, none of $W$, $X$, $Y$ is empty, and
  $\rho$ is a Boolean $3$-cycle.

  Now assume that $Z\neq\emptyset$.  If none of $T$, $U$, $V$ is
  empty, then $\rho$ is type $T_1$, so we assume not; say
  $V=\emptyset$.  If each of $W$, $X$, $Y$ is nonempty, then $\rho$ is
  type $T_2$, so we assume not.  If $W=\emptyset$, then $T,Y,Z,X,U$ is
  a lattice path ordering of the support of $\rho$, which is a
  contradiction.  Therefore $W\neq \emptyset$ and, without loss of
  generality, $X=\emptyset$.  If $T=\emptyset$, then $Y,Z,W,U$ is a
  lattice path ordering of the support of $\rho$, which is a
  contradiction.  Hence $T\neq\emptyset$.  If $Y$ is nonempty, then
  $\rho$ is type $T_4$, so we assume not.  If $U=\emptyset$, then
  $T,W,Z$ is a lattice path ordering of the support of $\rho$, which
  is a contradiction.  Therefore $U\neq\emptyset$ and $\rho$ is type
  $T_3$.
\end{proof}

We now present all four-element Boolean polymatroids, other than
Boolean $4$-cycles, that are excluded minors for the class of lattice
path polymatroids.  There are four types, and infinitely many of each
type.  Let $\rho$ be a Boolean polymatroid on four elements.

\begin{itemize}
\item[($F_1$)] The polymatroid $\rho$ is type $F_1$ if the supports
  have the form $A\cup B$, $A\cup B\cup C$, $B\cup C\cup D$, and
  $B\cup D$ where
  \begin{itemize}[leftmargin=*]
  \item[$\bullet$] $A$, $B$, $C$, and $D$ are pairwise disjoint and
  \item[$\bullet$] $\emptyset\notin\{A,B,C\}$.   
  \end{itemize}
\item[$\circ$] Example with $D\ne \emptyset$: two planes in the
  four-vertex simplex along with a line in exactly one of those planes
  and the line in just the other plane that is coplanar with the first
  line.  Example with $D=\emptyset$: a vertex in a triangle, the two
  lines containing that vertex, and the triangle.
\item[($F_2$)] The polymatroid $\rho$ is type $F_2$ if the supports
  have the form $A$, $B$, $C$, and $D$ where
  \begin{itemize}[leftmargin=*]
  \item[$\bullet$] $A$, $B$, and $C$ are pairwise disjoint, and
  \item[$\bullet$] $A\cap D$, $B\cap D$, and $C\cap D$ are all nonempty.  
  \end{itemize}
\item[$\circ$] Example: three noncolinear points
    along with the plane that contains them.
  \item[($F_3$)] The polymatroid $\rho$ is type $F_3$ if the supports
    have the form $A$, $A'$, $B$, and $C$ where
  \begin{itemize}[leftmargin=*]
  \item[$\bullet$] $A'\subseteq A$ and $A'\neq A$, and 
  \item[$\bullet$] $B\cap C=\emptyset$, and 
  \item[$\bullet$] both $B$ and $C$ are disjoint from $A-A'$, and
    neither is disjoint from $A'$.
  \end{itemize}
\item[$\circ$] Example: two points, the line they span, and a plane
  that contains that line.
\item[($F_4$)] The polymatroid $\rho$ is type $F_4$ if the supports
  have the form $A\cup B\cup C$, $B\cup C\cup D$, $C\cup D\cup E$, and
  $A\cup C\cup E$ where
  \begin{itemize}[leftmargin=*]
  \item[$\bullet$] $A$, $B$, $C$, $D$, and $E$ are all pairwise
    disjoint and nonempty.
  \end{itemize}
\item[$\circ$] Example: take four of the six lines in a four-vertex
  simplex, with the pair omitted being skew; embed this simplex in a
  five-vertex simplex and replace each line by the plane that is
  spanned by it and the new vertex; these four planes are the elements
  of the polymatroid.
\end{itemize}

\begin{figure}
\begin{tikzpicture}[scale=0.75]
  \draw[thick] (1.2,2.25) circle (0.75);
  \draw[thick] (4.4,3.2) circle (0.75);
  \draw[thick] (4.4,1.3) circle (0.75);
  \draw[thick] (3,2.25) circle (1.5);
  \node () at (2,3.8) {$D$};
  \node () at (5.5,3.2) {$B$};
  \node () at (5.5,1.3) {$C$};
  \node () at (0.7,3.2) {$A$};
  \node () at (3,0.2) {$F_2$};
  \node () at (1.7,2.25) {$\bullet$};
  \node () at (4,3) {$\bullet$};
  \node () at (4,1.5) {$\bullet$};
  
 \begin{scope}
   \clip (8.7,2.25) circle (1.5);
   \clip (10.5,2.25) circle (1.5);
   \fill[color= black!30] (10.5,2.25) circle (1.5);
 \end{scope}
 \begin{scope}
   \clip (12.3,2.25) circle (1.5);
   \clip (10.5,2.25) circle (1.5);
  \fill[color= black!30] (10.5,2.25) circle (1.5);
 \end{scope}
 \fill[color= white] (10.5,2.25) circle (0.75);
 \draw[thick] (8.7,2.25) circle (1.5);
  \draw[thick] (12.3,2.25) circle (1.5);
  \draw[thick] (10.5,2.25) circle (0.75);
  \draw[thick] (10.5,2.25) circle (1.5);
\node () at (7.6,3.8) {$B$};
\node () at (13.4,3.8) {$C$};
\node () at (10.5,2.25) {$A'$};
\node () at (10.5,4.1) {$A$};
\node () at (10.5,3.3) {$\bullet$};
\node () at (10,2.25) {$\bullet$};
\node () at (11,2.25) {$\bullet$};
 \node () at (10.5,0.2) {$F_3$};
\end{tikzpicture}

\caption{Venn diagrams showing the supports of elements in a Boolean
  polymatroid.  Areas shaded gray indicate that those sets are empty.
  A point in an area indicates that that set is nonempty.  Areas that
  are not gray and contain no point may be empty or nonempty.}
\label{4venn_ex}
\end{figure}
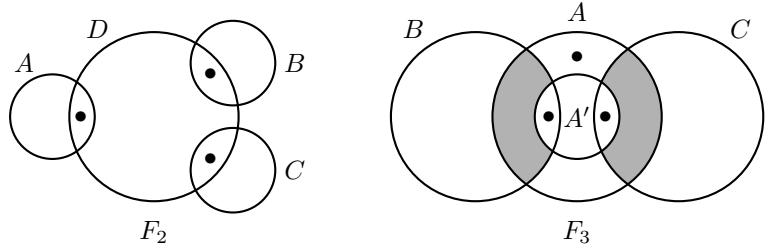

The Venn diagrams of the supports of the element for types $F_2$ and
$F_3$ are shown in Figure~\ref{4venn_ex}, where a point appears in an
area for which the set is known to be nonempty, and a darkened area
indicates that the set is empty.  It is straightforward to
  check that if $\rho$ is type $F_1$, $F_2$, $F_3$, or $F_4$, then
  $\rho$ is not lattice path, but all of its proper minors are.

\section{Proof of the main result}\label{sec:main}

In this section we prove our main result: when considered as a
subclass of the class of Boolean polymatroids, the set of excluded
minors given in the previous two sections for the class of lattice
path polymatroids is complete.

We start with an easy result that is worth noting.

\begin{lemma}\label{distinctsupport}
  If $\rho$ is a Boolean polymatroid that is an excluded minor for the
  class of lattice path polymatroids, and $e$ and $f$ are distinct
  elements of $E(\rho)$, then $s(e)\neq s(f)$.
\end{lemma}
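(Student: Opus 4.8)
The plan is to argue by contradiction: suppose $\rho$ is a Boolean excluded minor for lattice path polymatroids in which two distinct elements $e,f$ have $s(e)=s(f)$, and derive a contradiction with the definition of an excluded minor. The natural strategy is to show that if $s(e)=s(f)$, then $\rho$ is lattice path whenever one of its single-element deletions is, which by Lemma~\ref{lem:deletionsonly} forces $\rho$ to be lattice path, contradicting the assumption that it is an excluded minor (and hence not lattice path).

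First I would invoke the fact that $\rho$ is an excluded minor, so by Lemma~\ref{lem:deletionsonly} every single-element deletion of $\rho$ is lattice path; in particular $\rho_{\del f}$ is lattice path. Fix a lattice path ordering of $\rho_{\del f}$, together with a compatible lattice path ordering of its support, so that each remaining element (including $e$) has an interval support and conditions (S1) and (S2) hold. The key observation is that $e$ and $f$ have the same support $s(e)=s(f)$, which is an interval $[a,b]$ in this order. I would then insert $f$ into the ordering immediately adjacent to $e$ (say directly after $e$), giving it the same interval support $[a,b]$.

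The main verification — and the step I expect to be the only real content — is that this insertion preserves (S1) and (S2). Condition (S1) is immediate since $s(f)=[a,b]$ is an interval. For (S2), placing $f$ next to $e$ with identical endpoints $a_f=a_e=a$ and $b_f=b_e=b$ means the nondecreasing sequences of left endpoints and of right endpoints simply repeat a value at consecutive positions, so both remain nondecreasing. Thus the new ordering is a lattice path ordering of $\rho$ on all of $E(\rho)$, proving $\rho$ is lattice path. This contradicts the fact that an excluded minor is by definition not lattice path, completing the argument.

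I anticipate no genuine obstacle here; the one point requiring care is to confirm that the support used for $\rho_{\del f}$ is literally the restriction of the support for $\rho$ (so that $s(e)$ really is an interval in the ordering coming from the deletion), which follows from the discussion of supports under deletion preceding Lemma~\ref{lem:deletionsonly}: deleting $f$ from the Boolean polymatroid removes $s(f)$ from the list of sets but does not alter $s(e)$ for $e\neq f$. Given that, the rest is the routine endpoint-monotonicity check described above.
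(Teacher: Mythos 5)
Your proof is correct and is essentially the paper's own argument: the paper deletes $e$, takes a lattice path ordering of $\rho_{\del e}$, and inserts $e$ directly after $f$ (you do the symmetric thing, deleting $f$ and inserting it next to $e$), using that equal supports give equal interval endpoints so (S1) and (S2) are preserved. The paper states this in one line; your write-up just makes the endpoint check and the invariance of supports under deletion explicit.
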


\begin{proof}
  If $s(e)=s(f)$, then any lattice path ordering of $\rho_{\ba e}$ can
  be extended to a lattice path ordering of $\rho$ by adding $e$
  directly following $f$.
\end{proof}

Lemmas \ref{bullseye}, \ref{supportgraph}, 
and \ref{notcontained} treat the case in which,
in a Boolean polymatroid that is an excluded minor, the support of
some element is contained in another.

\begin{lemma}\label{bullseye}
  Let $\rho$ be a Boolean polymatroid that is an excluded minor for
  the class of lattice path polymatroids.  If $e,f,g\in E(\rho)$ and
  $s(e)\subsetneq s(f)\subsetneq s(g)$, then $\rho$ is type $F_1$ or
  $F_3$.
\end{lemma}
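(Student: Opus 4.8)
The plan is to exploit, via Lemma~\ref{lem:deletionsonly}, that $\rho$ is not a lattice path polymatroid but each single-element deletion is. Write $P=s(e)$, $Q=s(f)\setminus s(e)$ and $R=s(g)\setminus s(f)$; these are pairwise disjoint, and all three are nonempty ($P$ because it is a support, and $Q,R$ because the containments $s(e)\subsetneq s(f)\subsetneq s(g)$ are proper). Thus $s(e)=P$, $s(f)=P\cup Q$, $s(g)=P\cup Q\cup R$, and I set $S=[k]\setminus s(g)$ for the indices outside $g$'s support. A first observation is that the chain alone cannot be an excluded minor: ordering the support as $P,Q,R$ makes $s(e),s(f),s(g)$ nested initial intervals, so $\{e,f,g\}$ is lattice path. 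Hence $E(\rho)$ has a fourth element.

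The engine is the following fact about nested intervals: if $I_1\subsetneq I_2\subsetneq I_3$ are intervals $[a_t,b_t]$ in a lattice path ordering, then $a_1=a_2=a_3$ or $b_1=b_2=b_3$. Indeed, nesting gives $a_3\le a_2\le a_1$ and $b_1\le b_2\le b_3$; if $e$ precedes $g$ in the order then $a_1\le a_3$ by (S2), forcing all three left endpoints equal, and otherwise $b_3\le b_1$, forcing all three right endpoints equal. Consequently, in any lattice path ordering containing $e,f,g$, the blocks $P,Q,R$ occupy one contiguous run of positions, in the order $P,Q,R$ or its reverse $R,Q,P$, with every index of $S$ lying outside this run (and $S$ may be split to both sides of it). Since $\rho$ is not lattice path, neither of these two ``$P,Q,R$-contiguous'' arrangements extends to a lattice path ordering of $\rho$, whereas every proper deletion does extend.

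I would first treat the case $E(\rho)=\{e,f,g,h\}$ and pin down $H=s(h)$. The constraints are that $\rho$ is not lattice path together with the three conditions that $\rho_{\del e}=\{s(f),s(g),H\}$, $\rho_{\del f}=\{s(e),s(g),H\}$ and $\rho_{\del g}=\{s(e),s(f),H\}$ are lattice path; each of the latter is a three-element Boolean polymatroid, so by Lemma~\ref{3element} it is not one of $T_1,\dots,T_4$ or a Boolean $3$-cycle, which I would translate into containment/disjointness conditions on how $H$ meets $P,Q,R,S$. The key mechanism is that, because $Q$ lies strictly between $P$ and $R$ in the contiguous run, an $H$ that ``skips over'' a nonempty block forces a gap and so fails to be an interval in both arrangements, while the nested pair hidden in $\rho_{\del f}=\{s(e),s(g),H\}$ rules out an $H$ that meets $P$ or $R$ only partially. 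Running the analysis, the only supports $H$ for which $\rho$ is not lattice path while all three deletions are, should be (i) $H=P\cup R$ with $S=\emptyset$, and (ii) $H$ meeting $Q$ but disjoint from both $P$ and $R$ (with $H\cap S$ arbitrary). I would then exhibit the matching labels: in case (i), taking $A=Q$, $B=P$, $C=R$, $D=\emptyset$ gives $\{s(e),s(f),s(g),H\}=\{B,\,A\cup B,\,A\cup B\cup C,\,B\cup C\}$, which is type $F_1$; in case (ii), taking $A=s(g)$, $A'=s(f)$, $B=s(e)$, $C=H$ verifies the conditions of type $F_3$, since $A'\subsetneq A$, the set $A-A'=R$ is disjoint from both $B$ and $C$, we have $B\cap C=H\cap P=\emptyset$, and each of $B,C$ meets $A'=P\cup Q$.

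It remains to rule out a fifth element, and this is the step I expect to be the main obstacle. If $|E(\rho)|\ge 5$, then for every $h\notin\{e,f,g\}$ the restriction $\{e,f,g,h\}$ is a proper deletion, hence lattice path, so $H=s(h)$ is not of the forms (i) or (ii); equivalently each such $h$ can be inserted into at least one of the two $P,Q,R$-contiguous arrangements. The difficulty is to promote these individual insertions to a single global lattice path ordering of $\rho$, which would contradict that $\rho$ is not lattice path: one must show that the choices involved — which side of the run each index of $S$ is sent to, and the internal orders of the blocks — can be made simultaneously consistent for all the extra elements at once. I would attempt this by proving that every such ``harmless'' support is forced to be comparable to the blocks $P,Q,R$ in a uniform way, so that a single arrangement accommodates them all; verifying that no two harmless elements can obstruct opposite arrangements is the crux of reducing to $|E(\rho)|=4$.
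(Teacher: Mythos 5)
Your setup is sound and parallels the paper's opening moves: the nested-interval fact is correct, the forced contiguous $P,Q,R$ (or $R,Q,P$) run is correct, and your cases (i) and (ii) do correspond to type $F_1$ (necessarily with $D=\emptyset$) and type $F_3$ under the labelings you give. But the proposal has two genuine gaps, and the second is fatal to it as written. First, the four-element classification is asserted (``running the analysis \dots should be''), not carried out; it is a case analysis over how $H$ meets $P$, $Q$, $R$, $S$, including partial intersections, and it is exactly where the forbidden three-element restrictions ($T_3$, $T_4$, Boolean $3$-cycles) must be invoked, so it cannot be waved through. Second, and decisively, the reduction from $|E(\rho)|\geq 5$ to $|E(\rho)|=4$ --- which you yourself flag as the main obstacle --- is the real content of the lemma, and your strategy for it does not close. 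Knowing that each restriction $\rho|\{e,f,g,h\}$ is lattice path only gives insertability of each extra element separately; two extra elements whose supports partially overlap \emph{each other} (not merely the blocks) can be individually insertable yet jointly incompatible, and nothing in your plan excludes this. A restriction witnessing such a conflict has five elements, so you cannot call it a proper deletion unless $|E(\rho)|\geq 6$, which pushes you toward an induction you have not set up.

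The paper avoids the simultaneous-insertion problem entirely by inverting the logic: instead of showing that a large $\rho$ would be lattice path, it finds a four-element restriction of type $F_1$ or $F_3$ inside any excluded minor with a chain. Concretely, it takes a chain of maximum size, deletes its minimal element $e$, and works inside one fixed lattice path ordering of $\rho_{\del e}$, which already organizes \emph{all} remaining elements consistently. Using the absence of $T_4$ restrictions it reorders the support so that $s(e)$ becomes an interval; since $e$ still cannot be inserted (as $\rho$ is not lattice path), some element before or after the chain must block the insertion, and the absence of $T_3$ and $T_4$ restrictions pins down that blocker's support so that $\rho$ restricted to $e$, the blocker, and two chain elements is type $F_3$ or type $F_1$. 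The final, crucial observation is one line: a type $F_1$ or $F_3$ polymatroid is not lattice path, while every proper deletion of the excluded minor $\rho$ is lattice path, so this four-element restriction cannot be proper --- it is all of $\rho$. That single remark is what yields $|E(\rho)|=4$ for free, and it is the idea your proposal is missing.
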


\begin{proof}
  No $3$-element excluded minor has such a chain, so $|E(\rho)|\geq 4$
  and $\rho$ has no minors of types $T_1$, $T_2$, $T_3$, or $T_4$.
  Let $C$ be a subset of $E(\rho)$ of maximum size that can be ordered
  so that the support of each is contained in the support of the next.
  Suppose that $|C|\geq 3$.  By Lemma~\ref{distinctsupport}, no two
  supports are equal.  Let $e$ be the element in $C$ with the smallest
  support.  Then $\rho_{\ba e}$ has a lattice path ordering
  $e_1,e_2,\dots ,e_n$ where $s(e_i)=[a_i,b_i]$ for all $i$, and
  $a_1\leq a_2\leq \cdots \leq a_{n}$ and
  $b_1\leq b_2\leq \cdots \leq b_{n}$.  By (S3) in Lemma \ref{lem:S3},
  up to reversing the ordering on $\rho_{\ba e}$, we may assume that
  $s(e)\subsetneq s(e_k) \subsetneq s(e_{k+1})\subsetneq \cdots
  \subsetneq s(e_\ell )$ where $1\leq k < k +|C|-2=\ell \leq n$.  It
  follows that $a_k=a_{k+1}=\cdots =a_\ell$ and
  $b_k<b_{k+1}<\cdots <b_\ell$.  Let $[a,b]$ be the smallest interval
  such that $s(e)\subseteq [a,b]$.  Thus, $a,b\in s(e)$ and
  $a_k\leq a\leq b\leq b_k$.  Also, $s(e)$ might be a proper subset of
  $[a,b]$.

  We first show that $s(e)$ is an interval in some lattice path
  ordering on $\rho_{\del e}$.  To show this, assume that
  $s(e)\ne [a,b]$.  The next two results are the key to being able to
  reorder $[a,b]$ so that $s(e)$ is an interval and the new ordering
  of $E(\rho)$ is a lattice path ordering of $\rho_{\del e}$.

  \begin{sublemma}\label{nost1}
    If $p<k$ and $s(e_p)\cap ([a,b]-s(e))\ne \emptyset$, then
    $[a,b]\subseteq s(e_p)$.
  \end{sublemma}

  Note that $a_p\leq a_\ell\leq a$ and $b<b_\ell$.  If $b_p<b$, then
  $b\notin s(e_p)$, so $\rho |\{e,e_p,e_\ell\}$ is type $T_4$, but
  that is impossible.  So $b\leq b_p$, and the conclusion of
  \ref{nost1} holds.

  \begin{sublemma}\label{nost2}
    If $q>\ell$ and $s(e_q)\cap s(e)\ne\emptyset$, then
    $[a,b]-s(e) \subseteq s(e_q)$.
  \end{sublemma}

  If the conclusion failed, then neither $a$ nor the first element in
  $[a,b]-s(e)$ would be in $s(e_q)$; it would follow that
  $\rho |\{e,e_\ell ,e_q\}$ is type $T_4$, but that is impossible.  So
  \ref{nost2} holds.  

  With \ref{nost1} and \ref{nost2}, we can rearrange $[a,b]$, placing
  $s(e)$ first and $[a,b]-s(e)$ second, to make $s(e)$ into an
  interval without changing the lattice path ordering on
  $\rho_{\ba e}$.  So we may assume that $s(e)$ is the interval
  $[a,b]$.

  If we had $a=a_k$, then $(s(e_{k-1})-s(e))\cap s(e_k)\ne\emptyset$
  (otherwise inserting $e$ between $e_{k-1}$ and $e_k$ would give a
  lattice path ordering for $\rho$, which is impossible), so
  $\rho|\{e_{k-1},e,e_\ell\}$ would have type $T_3$, contrary to
  $\rho$ being an excluded minor.  So $a_k<a$.  Thus, $[a,b]$ cannot
  be moved to the beginning of $s(e_k)$, so either the support of some
  element before $e_k$ blocks it, that is,
  \begin{enumerate}
  \item[(i)] there is a $p<k$ with $[a_k,a)\cap s(e_p)\ne\emptyset$
    and $s(e)\nsubseteq s(e_p)$,
  \end{enumerate}
  or the support of some element after $e_\ell$ blocks it, and so
  \begin{enumerate}
  \item[(ii)] there is a $q>\ell$ with $s(e)\cap s(e_q)\ne\emptyset$ 
    and $a_k\not\in s(e_q)$.
  \end{enumerate}
  Suppose (i) occurs.  If $s(e)\cap s(e_p)\ne\emptyset$, then
  $\rho |\{e,e_p,e_\ell\}$ is type $T_4$, which is impossible.
  Therefore $b_p<a$, and $\rho |\{e,e_p,e_k,e_\ell\}$ is type $F_3$
  (since $s(e)$ and $s(e_p)$ are disjoint, use them as $B$ and $C$).
  Now assume that (ii) occurs.  Since $\rho |\{e,e_\ell ,e_q\}$ is not
  type $T_3$ or $T_4$, we know that $a_q\leq a$ and $b_q=b_\ell$.
  Since $\rho |\{e,e_k,e_q\}$ is not type $T_3$, we know that $a_q=a$
  and $b=b_k$.  Then $\rho |\{e,e_k,e_\ell ,e_q\}$ is type $F_1$,
  where $D=\emptyset$.
\end{proof}

For a Boolean polymatroid $\rho$, we define its \emph{support graph}
$G(\rho)$ to be the graph with vertex set $E(\rho )$ and edge set
$\{ef: e, f\in E(\rho ),\, e\ne f,\, s(e)\cap s(f)\neq \emptyset\}$.
Note that, if $G(\rho)$ is an $n$-cycle for some integer $n\geq 4$,
then $\rho$ is a Boolean $n$-cycle.

\begin{lemma}\label{supportgraph}
  Let $\rho$ be a Boolean polymatroid that is an excluded minor for
  the class of lattice path polymatroids.  Then $G(\rho)$ is
  connected.  Furthermore, if $s(e)\subsetneq s(f)$ for some elements
  $e,f\in E(\rho)$, then $G(\rho_{\ba e})$ is also connected.
\end{lemma}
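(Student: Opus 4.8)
The plan is to prove both connectivity statements by contradiction, exploiting Lemma \ref{lem:deletionsonly}, which tells us that every single-element deletion of an excluded minor $\rho$ is a lattice path polymatroid. First I would show that $G(\rho)$ is connected. Suppose instead that $G(\rho)$ is disconnected, so $E(\rho)$ splits as $E_1\sqcup E_2$ with no edges between the parts; then the supports of elements in $E_1$ are disjoint from the supports of elements in $E_2$. The key observation is that disjoint support-families can be ordered independently: given a lattice path ordering of each piece, we can concatenate them. More precisely, I would delete a single element $e$ from the larger part (using $|E(\rho)|\ge 3$, which follows since no nonempty polymatroid on at most two elements is an excluded minor), obtain a lattice path ordering of $\rho_{\ba e}$ from Lemma \ref{lem:deletionsonly}, and then argue that because $s(e)$ is disjoint from the supports of everything in the \emph{other} component, we can reinsert $e$ while keeping the ordering valid. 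The mechanism is that the support of $e$ occupies a block of $[k]$ that meets no support outside its own component, so we may slide that block to an extreme position (say, immediately adjacent to the block used by the rest of $e$'s component) and place $e$ there; properties (S1) and (S2) are preserved because the relevant intervals are being permuted among mutually disjoint coordinates. This contradicts $\rho$ being non-lattice-path.

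For the second statement, I would assume $s(e)\subsetneq s(f)$ and suppose for contradiction that $G(\rho_{\ba e})$ is disconnected. Since $s(e)\subsetneq s(f)$, every index in $s(e)$ lies in $s(f)$, so in $G(\rho)$ the element $e$ is adjacent only to elements whose support meets $s(e)$, and all of those also meet $s(f)$; in particular $e$ and $f$ lie in the same component of $G(\rho)$. The idea is that deleting $e$ cannot disconnect the graph in a way that separates $f$ from $e$'s other neighbors, because $s(f)\supsetneq s(e)$ means $f$ inherits all the adjacencies that $e$ had. Concretely, if $g$ is any neighbor of $e$ in $G(\rho)$ other than $f$, then $s(g)\cap s(e)\ne\emptyset$, whence $s(g)\cap s(f)\ne\emptyset$, so $g$ is adjacent to $f$ in $G(\rho_{\ba e})$ as well. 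Thus every edge through $e$ in $G(\rho)$ is "covered" by a corresponding edge through $f$ in $G(\rho_{\ba e})$, so removing $e$ deletes no connection that is not already realized via $f$. Since $G(\rho)$ is connected by the first part, $G(\rho_{\ba e})$ must be connected too.

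The main obstacle is the first part, and specifically making the reinsertion argument airtight: I must verify that after deleting $e$ and obtaining a lattice path ordering of the support of $\rho_{\ba e}$, the block of indices $s(e)$ can genuinely be positioned so that both (S1) and (S2) hold simultaneously for the full polymatroid $\rho$. The delicate point is that deletion of $e$ from a Boolean polymatroid removes the support set $s(e)$ from the ambient $[k]$ entirely (as recorded in the discussion of minors via supports), so the lattice path ordering of $\rho_{\ba e}$ lives on $[k]-s(e)$; I need to reintroduce the coordinates of $s(e)$ and show they can be inserted as a contiguous interval adjacent to the component of $e$ without disturbing the nesting and monotonicity conditions on the other components. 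Because the components use pairwise-disjoint coordinate blocks, this amounts to a block permutation of $[k]$, under which intervals remain intervals and the monotonicity of left and right endpoints is preserved within each block; assembling these blocks in component order and inserting $e$'s block yields the desired lattice path ordering of $\rho$, the sought contradiction. For the second part there is no real obstacle beyond the bookkeeping just described, since the adjacency-covering argument is direct.
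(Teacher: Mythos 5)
Your argument for the second statement is correct and is essentially the paper's: since $s(e)\subsetneq s(f)$, every neighbor of $e$ in $G(\rho)$ is also a neighbor of $f$, so any connection through $e$ can be rerouted through $f$, and connectivity of $G(\rho_{\ba e})$ follows from connectivity of $G(\rho)$. The first statement, however, has a genuine gap. Your reinsertion mechanism treats $s(e)$ and ``the block used by the rest of $e$'s component'' as disjoint blocks of coordinates that can be slid next to one another. They are not disjoint: whenever $e$ has a neighbor inside its own part $E_1$ (the typical case, since $E_1$ is connected to $e$ through shared coordinates when $|E_1|\geq 2$), the set $s(e)$ overlaps $\bigcup_{g\in E_1-e}s(g)$. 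So the delicate point is not the interaction between the two parts (their coordinate sets really are disjoint, and permuting those blocks is harmless); it is that the ordering induced on the coordinates of $E_1-e$ by a lattice path ordering of $\rho_{\ba e}$ need not position the shared coordinates of $s(e)$ as a usable interval, and no permutation of whole component-blocks can repair an ordering that is wrong \emph{inside} the component of $e$.

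Concretely, let $s(g_1)=\{1,2\}$, $s(g_2)=\{3,4\}$, $s(e)=\{2,3\}$ in one part and $s(h)=\{9\}$ in the other. Then $g_1,h,g_2$ with coordinate order $2,1,9,3,4$ is a lattice path ordering of $\rho_{\ba e}$, and your block operation yields the coordinate order $2,1,3,4,9$, in which $s(e)=\{2,3\}$ occupies positions $1$ and $3$; it is not an interval, so $e$ cannot be inserted, and making it an interval would require reordering coordinates within $e$'s own component, which your argument never provides. (This particular $\rho$ is lattice path, hence not an excluded minor, but that only confirms that your step does not follow from the hypotheses you invoke for it; you give no argument that uses the excluded-minor hypothesis to rule out such induced orderings.) The repair is the paper's much shorter proof, which never reinserts anything: since both parts are nonempty, $\rho|E_1=\rho_{\del E_2}$ and $\rho|E_2=\rho_{\del E_1}$ are proper deletions of the excluded minor $\rho$, hence both are lattice path polymatroids; concatenating a lattice path ordering of $\rho|E_1$ (elements and coordinates) with one of $\rho|E_2$ satisfies (S1) and (S2) because the two parts use disjoint coordinates, giving a lattice path ordering of $\rho$ and the desired contradiction.
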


\begin{proof}
  Suppose $G$ has a component with vertex set $A$ where $E(\rho)-A$ is
  not empty.  Then $\rho |A$ and $\rho |(E(\rho )-A)$ each have
  lattice path orderings, and concatenating these two orderings gives
  a lattice path ordering of $\rho$, which is a contradiction.

  Suppose $s(e)\subsetneq s(f)$, and $G(\rho_{\ba e})$ has distinct
  components $X$ and $Y$.  Then $s(e)$ has a nonempty intersection
  with $s(x)$ and $s(y)$ for some $x\in V(X)$ and $y\in V(Y)$.  Then
  $xfy$ is a path connecting $X$ and $Y$ in $G(\rho_{\ba e})$, which
  is a contradiction.
\end{proof}

We now consider the general case that the support of one element
contains another.

\begin{lemma}\label{notcontained}
  Let the Boolean polymatroid $\rho$ be an excluded minor for the
  class of lattice path polymatroids.  If $s(e)\subsetneq s(f)$ for
  some $e,f\in E(\rho)$, then $\rho$ is type $T_3$, $T_4$, $F_1$,
  $F_2$, or $F_3$.
\end{lemma}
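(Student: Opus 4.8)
The plan is to reduce to the situation already handled by Lemma~\ref{bullseye} and then to analyze the remaining ``short chain'' case by trying to insert $e$ into a lattice path ordering of $\rho_{\ba e}$, in the spirit of the proof of Lemma~\ref{bullseye}. First I would dispose of the case where the containment extends: if there are $g\in E(\rho)$ with $s(e)\subsetneq s(f)\subsetneq s(g)$ (or with a support properly inside $s(e)$), then Lemma~\ref{bullseye} already gives that $\rho$ is type $F_1$ or $F_3$. So I would assume henceforth that no three supports form a chain. Since $\rho$ is an excluded minor, $\rho_{\ba e}$ is a lattice path polymatroid; fix a lattice path ordering $e_1,\dots,e_n$ of $\rho_{\ba e}$ with $s(e_i)=[a_i,b_i]$, $a_1\le\cdots\le a_n$, $b_1\le\cdots\le b_n$, and write $f=e_m$. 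By property (S3) of Lemma~\ref{lem:S3}, the indices $j$ with $s(e)\subseteq s(e_j)$ form an interval $[i_0,\ell_0]\ni m$, the \emph{window}. Using Lemma~\ref{distinctsupport}, the no-three-chain hypothesis forces both $a_{i_0}<\cdots<a_{\ell_0}$ and $b_{i_0}<\cdots<b_{\ell_0}$ to increase strictly across the window: equal left endpoints $a_j=a_{j'}$ for $j<j'$ in the window would give $s(e)\subsetneq s(e_j)\subsetneq s(e_{j'})$, a forbidden three-term chain, and symmetrically for the right endpoints. Writing $[a,b]$ for the smallest interval containing $s(e)$, one then has $a_{\ell_0}\le a\le b\le b_{i_0}$.

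The next step is to bound the window. For any two consecutive window elements $e_j,e_{j+1}$ one has $s(e)\subseteq s(e_j)\cap s(e_{j+1})=[a_{j+1},b_j]$, and the triple $\rho|\{e,e_j,e_{j+1}\}$ is readily seen to be type $T_3$ \emph{unless} $s(e)$ fills the overlap, i.e.\ unless $s(e)=[a_{j+1},b_j]$. Since $T_3$ is an excluded minor, minor-minimality forces, for each consecutive pair, either that this triple is all of $\rho$ (whence $\rho$ is type $T_3$) or that $s(e)=[a_{j+1},b_j]$. A window of size at least three would make two distinct consecutive pairs demand $s(e)=[a_{i_0+1},b_{i_0}]$ and $s(e)=[a_{i_0+2},b_{i_0+1}]$, intervals with distinct left endpoints, a contradiction; so the window has at most two elements. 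When it has exactly two, the same dichotomy applies: if instead $s(e)$ equalled the overlap, then $e$ could be inserted between the two window elements, making $\rho$ lattice path, which is impossible; hence $\rho$ is type $T_3$. This settles everything except a window of size one.

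It remains to treat $s(e)\subsetneq s(e_m)$ with $e_m$ the unique support containing $s(e)$. Here I would follow the template of Lemma~\ref{bullseye}. First, using analogues of Sublemmas~\ref{nost1} and~\ref{nost2}---an element before or after $e_m$ that meets the gap $[a,b]\setminus s(e)$ but not all of it creates a type-$T_4$ restriction, which is impossible---I would rearrange $[a,b]$ so that $s(e)$ becomes an interval without disturbing the lattice path ordering of $\rho_{\ba e}$. Because $\rho$ is not lattice path, $e$ still cannot be inserted adjacent to $e_m$, so the insertion is blocked from the left or from the right. Reading off the blockers then pins down the type: a single blocker that meets $s(e)$ yields a $T_4$; a blocker disjoint from $s(e)$ together with $e_m$ yields an $F_1$ or $F_3$; and blockers on both sides meeting two further disjoint pieces of $s(e_m)$ (so that $s(e_m)$ contains $s(e)$ and two disjoint chunks, each met by a distinct neighbour that is otherwise disjoint from $s(e)$) yield an $F_2$. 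In each subcase minor-minimality upgrades the forbidden restriction to an equality, which simultaneously caps $|E(\rho)|$ at four.

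I expect the window-size-one analysis to be the main obstacle. The delicate points are controlling how $s(e)$ can sit non-consecutively inside $s(e_m)$ after the interval-izing step, and enumerating the left/right blocking patterns so that every way the insertion fails is matched to exactly one type. The $F_2$ pattern is the subtle one, since it requires three pairwise-disjoint regions of $s(e_m)$---the copy of $s(e)$ and two others, each met by its own neighbour---and it is there that most of the case analysis will concentrate; the remaining patterns reduce fairly directly to the $T_4$, $F_1$, and $F_3$ configurations already encountered.
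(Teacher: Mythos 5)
Your opening reductions are sound: invoking Lemma~\ref{bullseye} to exclude three-term chains of supports, and the window analysis (the window is an interval of positions by Lemma~\ref{lem:S3}, its endpoints increase strictly by Lemma~\ref{distinctsupport}, and a window of size at least two forces type $T_3$ via the ``$T_3$ unless $s(e)$ fills the overlap'' dichotomy) correctly recovers the paper's Sublemma~\ref{nomorev2}, arguably more cleanly; the all-disjoint case giving $F_2$ also matches the paper. The gap is in the window-of-size-one case, which you yourself flag as the main obstacle: your proposed analogues of Sublemmas~\ref{nost1} and~\ref{nost2} are false, and the interval-izing step they are meant to justify can be outright impossible. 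In Lemma~\ref{bullseye} those sublemmas conclude type $T_4$ by playing the blocker $e_p$ against the \emph{top} element $e_\ell$ of a chain of length at least two above $s(e)$: the inequality $b<b_\ell$ is exactly what makes the region $T=s(e_\ell)\setminus\bigl(s(e_p)\cup s(e)\bigr)$ nonempty, and $T\neq\emptyset$ is one of the defining requirements of type $T_4$. After your reduction there is no such $e_\ell$; only $e_m$ is available, and $T=s(e_m)\setminus\bigl(s(e_p)\cup s(e)\bigr)$ may be empty, in which case $\rho|\{e,e_p,e_m\}$ is lattice path and yields no contradiction at all.

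This failure is not a repairable technicality, because it occurs inside an actual excluded minor. Take $\rho$ of type $F_1$ with $D\neq\emptyset$: supports $s(e)=A\cup B$, $s(x_2)=A\cup B\cup C$, $s(x_3)=B\cup C\cup D$, $s(x_4)=B\cup D$. Then $x_2,x_3,x_4$, with the support ordered as $A,C,B,D$, is a lattice path ordering of $\rho_{\backslash e}$; the window is $\{x_2\}$, the hull is $[a,b]=A\cup C\cup B$ with gap $C$; and $x_4$ lies after $x_2$, meets $s(e)$ (in $B$), but does not contain the gap. Yet $\rho|\{e,x_2,x_4\}$ is lattice path (order the support as $D,B,A,C$), not type $T_4$, so your sublemma's conclusion and its proof both fail. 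Worse, one can check directly that \emph{no} ordering of the support makes the four sets $s(e),s(x_2),s(x_3),s(x_4)$ simultaneously contiguous, so the step ``rearrange $[a,b]$ so that $s(e)$ becomes an interval'' cannot be carried out at all here: your first move fails precisely when $\rho$ is of type $F_1$, which is why your blocker enumeration has no path that actually produces the $F_1$ outcome. The paper avoids this by never interval-izing in this case: it picks a neighbour $e_{i+1}$ with $s(e)\cap s(e_{i+1})\neq\emptyset$, analyzes the Venn diagram of $\{e,e_i,e_{i+1}\}$ directly, and splits into the two configurations of Figure~\ref{ncproof}: case (a) ($W=\emptyset$, $T\neq\emptyset$), where a blocker argument close in spirit to yours gives $F_3$, and case (b) ($T=\emptyset$, $W\neq\emptyset$) --- exactly the configuration above --- where a different blocker argument, also using the exclusion of $T_2$, $T_1$, and Boolean $3$-cycles, produces $F_1$. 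You would need to add something equivalent to that second case for the proof to be complete.
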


\begin{proof}
  Let $e_1,e_2,\dots ,e_n$ be a lattice path ordering of
  $\rho_{\ba e}$, where $s(e_k)=[a_k,b_k]$ with
  $a_1\leq a_2\leq \cdots \leq a_n$ and
  $b_1\leq b_2\leq \cdots \leq b_n$.  Say $f$ is $e_i$.

  We first show
  \begin{sublemma}\label{nomorev2}
    if $s(e)\subsetneq s(g)$ for some $g\in E(\rho)-\{e,e_i\}$, then
    $\rho$ is type $T_3$, $F_1$, or $F_3$.
  \end{sublemma}
      
  If $\rho$ is not type $F_1$ or $F_3$, then
  Lemma~\ref{bullseye} gives $s(e_i)\nsubseteq s(g)$ and
  $s(g)\nsubseteq s(e_i)$ for all such $g$.  First assume that all
  such $g$ satisfy $s(e)=s(e_i)\cap s(g)$.  By Lemma \ref{lem:S3}, it
  follows that either $e_{i-1}$ or $e_{i+1}$ is such a $g$.  If
  $g=e_{i-1}$, then $s(e)=[a_i,b_{i-1}]$, so
  $e_1,e_2,\dots ,e_{i-1},e,e_i,\dots ,e_{n}$ is a lattice path
  ordering of $\rho$; if $g=e_{i+1}$, then $s(e)=[a_{i+1},b_i]$, so
  $e_1,e_2,\dots ,e_i,e,e_{i+1},\dots ,e_{n}$ is a lattice path
  ordering of $\rho$; both conclusions are impossible, so
  $s(e)\subsetneq s(e_i)\cap s(g)$.  Therefore $\rho |\{e,e_i,g\}$,
  and so $\rho$, is type $T_3$.

  Now assume that if $e_j\in E(\rho)-\{e,e_i\}$, then
  $s(e)\not\subseteq s(e_j)$ .
        
  Next we show that
  \begin{sublemma}
    if $s(e)\cap s(e_j)=\emptyset$ for all $e_j\in E(\rho)-\{e,e_i\}$,
    then $\rho$ is type $F_2$.
  \end{sublemma}
    
  If $i=1$, then $s(e)\subseteq [a_1,a_2)$, so we can reorder this
  interval so that $e,e_1,e_2,\dots , e_{n}$ is a lattice path
  ordering of $\rho$; that is a contradiction, so $i> 1$.  Similarly,
  $i< n$.  By Lemma~\ref{supportgraph}, neither $s(e_{i-1})$ nor
  $s(e_{i+1})$ is disjoint from $s(e_i)$. However, both are disjoint
  from $s(e)$, and so from each other (since $s(e)\subsetneq s(e_i)$),
  so $\rho |\{e,e_{i-1},e_i,e_{i+1}\}$, and so $\rho$, is type $F_2$
  where $s(e_i)$ is $D$.

  Now assume that $s(e)\cap s(e_j)\ne\emptyset$ for some
  $e_j\in E(\rho)-\{e,e_i\}$.
    
  Up to reversing the lattice path ordering of $\rho_{\ba e}$, we may
  assume that $s(e)\cap s(e_{i+1})\ne\emptyset$.  Let the supports of
  $e_i$, $e_{i+1}$, and $e$ be represented by the supports of $e$,
  $f$, and $g$, respectively in Figure~\ref{3venn}.  Then
  $Z\neq\emptyset$ and, since $s(e)$ is properly contained in
  $s(e_i)$, both $V$ and $X$ are empty, and either $T$ or $W$ is
  nonempty.  Now $Y\ne\emptyset$ since $s(e)\nsubseteq s(e_{i+1})$.
  If $T$ and $W$ are both nonempty, then $\rho |\{e,e_i,e_{i+1}\}$,
  and so $\rho$, is type $T_4$.  The remaining cases are where the
  supports of $e_i$, $e_{i+1}$, and $e$ are as shown in the Venn
  diagrams in Figure~\ref{ncproof}, where $U$ may be empty or
  nonempty.

  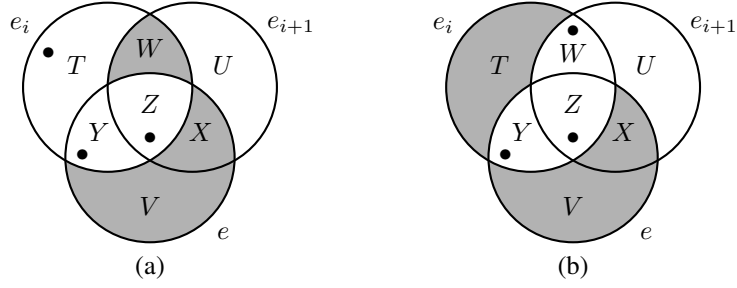
\begin{figure}
\begin{tikzpicture}[scale=0.75]
\begin{scope}
\clip (3.75, 3.5) circle (1.5);
\clip (2.25, 3.5) circle (1.5);
\fill[color= black!30] (0.75,0.75) rectangle (5.25,5); 
\end{scope}
\begin{scope}
\clip (3,2.25) circle (1.5);
\fill[color= black!30] (0.75,0.75) rectangle (5.25,5); 
\end{scope}
 \begin{scope}
 \clip (2.25, 3.5) circle (1.5);
 \clip (3,2.25) circle (1.5);
 \fill[color= white]  (0.75,0.75) rectangle (5.25,5); 
 \end{scope}
  \draw[thick] (2.25,3.5) circle (1.5);
  \draw[thick] (3.75,3.5) circle (1.5);
  \draw[thick] (3,2.25) circle (1.5);
  \node () at (1.7,3.9) {$T$};
  \node () at (1.2,4.1) {$\bullet$};
\node () at (4.3,3.9) {$U$};
\node () at (3,1.4) {$V$};
\node () at (5.5,4.6) {$e_{i+1}$};
\node () at (0.7,4.6) {$e_i$};
\node () at (4.3,0.9) {$e$};
\node () at (3,3.2) {$Z$};
\node () at (3,2.6) {$\bullet$};
\node () at (3,4.2) {$W$};
\node () at (2.1,2.7) {$Y$};
\node () at (1.8,2.3) {$\bullet$};
\node () at (3.9,2.7) {$X$};

\node () at (3,0.3) {(a)};

\begin{scope}
\clip (9.75,3.5) circle (1.5);
\fill[color= black!30] (8.2,0.75) rectangle (12.75,5); 
\end{scope}
\begin{scope}
\clip (10.5,2.25) circle (1.5);
\fill[color= black!30] (8.2,0.75) rectangle (12.75,5); 
\end{scope}
 \begin{scope}
 \clip (9.75,3.5) circle (1.5);
 \clip (10.5,2.25) circle (1.5);
 \fill[color= white] (8.2,0.75) rectangle (12.75,5); 
\end{scope}
 \begin{scope}
 \clip (9.75,3.5) circle (1.5);
 \clip (11.25,3.5) circle (1.5);
 \fill[color= white] (8.2,0.75) rectangle (12.75,5); 
 \end{scope}
  \draw[thick] (9.75,3.5) circle (1.5);
  \draw[thick] (11.25,3.5) circle (1.5);
  \draw[thick] (10.5,2.25) circle (1.5);
\node () at (9.2,3.9) {$T$};
\node () at (11.8,3.9) {$U$};
\node () at (10.5,1.4) {$V$};
\node () at (13,4.6) {$e_{i+1}$};
\node () at (8.2,4.6) {$e_i$};
\node () at (11.8,0.9) {$e$};
\node () at (10.5,3.2) {$Z$};
\node () at (10.5,2.6) {$\bullet$};
\node () at (10.5,4.1) {$W$};
\node () at (10.5,4.5) {$\bullet$};
\node () at (9.6,2.7) {$Y$};
\node () at (9.3,2.3) {$\bullet$};
\node () at (11.4,2.7) {$X$};

\node () at (10.5,0.3) {(b)};

\end{tikzpicture}

    \caption{Venn diagram showing the supports of 
      $e_i,e_{i+1}$, and $e$.}
    \label{ncproof}
  \end{figure}
    
  The case in Figure~\ref{ncproof}(a) has $Z=[a_{i+1},b_i]$.  Now
  $s(e)$ cannot be an interval since otherwise
  $e_1,e_2,\dots ,e_i,e,e_{i+1},\dots ,e_n$ would be a lattice path
  ordering of $\rho$.  Since no reordering of $T\cup Y$ having $T$
  before $Y$ is a lattice path ordering,
  \begin{itemize}
  \item $i>1$, and
  \item there is a $j<i$ with $s(e_j)\cap Y\ne\emptyset$ and
    $T\not\subseteq s(e_j)$.
  \end{itemize}
  Thus, $s(e_j)\cap Z=\emptyset$.  
  If $s(e_j)\cap T\ne\emptyset$, then the proper minor
    $\rho |\{e,e_j,e_i\}$ is type $T_4$, which is impossible.  Thus,
    $s(e_j)\cap T=\emptyset$, and so $\rho |\{e,e_j,e_i,e_{i+1}\}$,
    and hence $\rho$, is type $F_3$.  
    
  The case in Figure~\ref{ncproof}(b) has $Y=[a_i,a_{i+1})$ and
  $W\cup Z=[a_{i+1},b_i]$.  Now $s(e)$ cannot be an interval since
  otherwise $e_1,e_2,\dots ,e_{i-1},e,e_{i},\dots ,e_n$ would be a
  lattice path ordering of $\rho$.  Likewise, we cannot reorder the
  support of $\rho_{\ba e}$ to get a lattice path ordering of the
  support of $\rho$, so either
  \begin{enumerate}
  \item[(i)] there is a $j<i$ with $s(e_j)\cap W\ne \emptyset$ and
    $Z\not\subseteq s(e_j)$, or
  \item[(ii)] there is a $j>i+1$ with $s(e_j)\cap Z\ne \emptyset$ and
    $W\not\subseteq s(e_j)$.
  \end{enumerate}

  Suppose $j<i$ satisfies (i).  Then $Y\subseteq s(e_j)$ and either
  $s(e_j)\cap Z$ is empty and $\rho |\{e,e_j,e_{i+1}\}$ is a Boolean
  $3$-cycle, or $s(e_j)\cap Z$ is not empty and
  $\rho |\{e,e_j,e_{i+1}\}$ is type $T_2$.  This is impossible since
  this is a proper minor of $\rho$.  Thus, (ii) must occur.

  From (ii), we get $s(e_j)\cap Y=\emptyset$.  If
  $s(e_j)\cap W\ne\emptyset$, then the proper minor
  $\rho |\{e,e_i,e_j\}$ is type $T_4$, which is impossible.  Thus,
  $s(e_j)\cap W=\emptyset$.  Then $s(e_j)\cap s(e_i)\subseteq Z$.  The
  proper minor $\rho |\{e,e_{i+1},e_j\}$ is not type $T_1$, so
  $s(e_j)=U\cup Z'$ for some $Z'\subseteq Z$.  Then $U\neq\emptyset$,
  since otherwise $s(e_j)\subsetneq s(e_{i+1})\subsetneq s(e_i)$, and
  $\rho$ is type $F_1$ or $F_3$ by Lemma~\ref{bullseye}, as desired.
  Let $Z''=Z\ba Z'$, which may be empty.  Then
  $s(e)=Y\cup Z'\cup Z''$, and $s(e_i)=Y\cup Z'\cup Z''\cup W$, and
  $s(e_{i+1})=Z'\cup Z''\cup W\cup U$, and $s(e_j)=Z'\cup U$.  The
  only set in $\{U,W,Y,Z',Z''\}$ that may be empty is $Z''$.  If
  $Z''\ne \emptyset$, then the proper minor $\rho |\{e,e_{i+1},e_j\}$
  would be type $T_4$, which is impossible.  Thus, $Z''=\emptyset$,
  and so $\rho |\{e,e_i,e_{i+1},e_j\}$, and hence $\rho$, is type
  $F_1$.
\end{proof}

We prove one last lemma before proving the main result.  

\begin{lemma}\label{onecase}
  Let $\rho$ be a Boolean polymatroid that is an excluded minor for
  the class of lattice path polymatroids.  Let $e_1,e_2,\dots ,e_{n}$
  be a lattice path ordering of $\rho_{\ba e}$ for some
  $e\in E(\rho)$.  If
  $\emptyset\neq s(e_i)\cap s(e_{i+1})\subseteq s(e)\subseteq
  s(e_i)\cup s(e_{i+1})$ for some $i\in\{1,2,\dots ,n\}$, then
  $s(c)\subsetneq s(d)$ for some $c,d\in E(\rho)$.
\end{lemma}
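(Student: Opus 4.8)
The plan is to prove the stronger statement that $s(e)\subseteq s(e_i)$ or $s(e)\subseteq s(e_{i+1})$; since supports are distinct by Lemma~\ref{distinctsupport}, either containment is proper and gives the conclusion with $\{c,d\}\subseteq\{e,e_i,e_{i+1}\}$. First I would fix coordinates as in the earlier proofs: relabel the support so that $s(e_t)=[a_t,b_t]$ with $a_1\le\cdots\le a_n$ and $b_1\le\cdots\le b_n$. The hypothesis then reads $[a_{i+1},b_i]\subseteq s(e)\subseteq[a_i,b_{i+1}]$ with $a_{i+1}\le b_i$, so I can write $s(e)=L\cup[a_{i+1},b_i]\cup R$ where $L=s(e)\cap[a_i,a_{i+1})$ and $R=s(e)\cap(b_i,b_{i+1}]$. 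If $R=\emptyset$ then $s(e)\subseteq s(e_i)$, and if $L=\emptyset$ then $s(e)\subseteq s(e_{i+1})$; in either case we are done. So the entire content is to rule out $L\ne\emptyset\ne R$.

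Assume $L$ and $R$ are both nonempty. The first observation is that $s(e)$ cannot equal the interval $[\min L,\max R]$: since $a_i\le\min L<a_{i+1}$ and $b_i<\max R\le b_{i+1}$, inserting $e$ between $e_i$ and $e_{i+1}$ would satisfy (S1) and (S2), exhibiting $\rho$ as a lattice path polymatroid and contradicting that $\rho$ is an excluded minor. Hence $s(e)$ has a gap, and after possibly reversing the ordering of $\rho_{\ba e}$ (which interchanges the roles of $e_i,e_{i+1}$ and of $L,R$) I may assume the gap lies on the left: there is an $m$ with $\min L<m<a_{i+1}$, $m\in s(e_i)$, and $m\notin s(e)$. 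Now I analyze the block $B=[a_i,a_{i+1})$. Each $e_t$ with $t\le i$ meets $B$ in a prefix $[a_i,b_t]$, and these prefixes are nested, while no $e_t$ with $t>i$ meets $B$. I would then try to reorder $B$ (respecting the nested prefixes so that $\rho_{\ba e}$ stays a lattice path ordering) so that $L$ becomes a suffix of $B$, and symmetrically reorder $(b_i,b_{i+1}]$ so that $R$ becomes a prefix. If this succeeds on both sides, then $s(e)$ becomes an interval and inserting $e$ again contradicts $\rho$ not being lattice path.

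Otherwise the reordering is blocked on, say, the left, which I would show forces an element $e_p$ with $p<i$ whose support $[a_p,b_p]$ ends strictly inside $B$ together with $x\in L\cap s(e_p)$ and $y\in B\setminus s(e)$ satisfying $x\le b_p<y$. Reading off the Venn diagram of $s(e_p),s(e),s(e_i)$ (taking $s(e_i),s(e),s(e_p)$ as the circles $e,f,g$ of Figure~\ref{3venn}) shows the triple intersection $Z$ (containing $x$), the $s(e_i)$-only region $T$ (containing $y$), the $s(e)$-only region $U$ (containing $R$), and the region $W=(s(e_i)\cap s(e))\setminus s(e_p)$ (containing $[a_{i+1},b_i]$) are all nonempty, while $X=(s(e)\cap s(e_p))\setminus s(e_i)=\emptyset$ because $s(e_p)\setminus s(e_i)=[a_p,a_i)$ is disjoint from $s(e)$. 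Thus $\rho|\{e_p,e,e_i\}$ is type $T_1$ (if the $s(e_p)$-only region $[a_p,a_i)$ is nonempty), $T_3$ (if both $[a_p,a_i)$ and $(s(e_i)\cap s(e_p))\setminus s(e)$ are empty), or $T_4$ (otherwise). Since $p<i$, this triple omits $e_{i+1}$, so it is a proper restriction, hence a proper minor of $\rho$ that is not lattice path, contradicting that $\rho$ is an excluded minor. This rules out $L\ne\emptyset\ne R$ and completes the proof.

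I expect the main obstacle to be the reordering bookkeeping in the third paragraph: making precise what ``blocked'' means in terms of the nested prefixes (a layer decomposition of $B$ in which a valid reordering must list the layers in their containment order), verifying that permuting labels within $B$ preserves (S1) and (S2) for $\rho_{\ba e}$, and checking that the blocking configuration really yields a $T_1/T_3/T_4$ triple in every case. The two ``insert $e$'' contradictions and the coordinate setup are routine.
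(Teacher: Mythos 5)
Your proposal is correct, and its engine is the same as the paper's: either the two blocks $[a_i,a_{i+1})$ and $(b_i,b_{i+1}]$ can be reordered so that $s(e)$ becomes an interval that inserts between $e_i$ and $e_{i+1}$ (contradicting that $\rho$ is not lattice path), or some element blocks the reordering and a three-element restriction of forbidden type results. The genuine difference is the logical frame, and it changes the endgame. The paper argues by contradiction from the global assumption that \emph{no} support is contained in any other; that assumption makes all four regions $U$, $V$, $W$, $Y$ of the Venn diagram of $\{e,e_i,e_{i+1}\}$ nonempty and forces $a_j<a_i$ for the blocking element $e_j$ (else $s(e_j)\subseteq s(e_i)$), so the paper's final triple is always of type $T_1$. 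You instead prove the stronger conclusion $s(e)\subsetneq s(e_i)$ or $s(e)\subsetneq s(e_{i+1})$, assuming only that your $L$ and $R$ (the paper's $W$ and $Y$) are nonempty; since you cannot assume $[a_p,a_i)\neq\emptyset$ or $(s(e_i)\cap s(e_p))\setminus s(e)\neq\emptyset$, you need the three-way split into types $T_1$, $T_3$, $T_4$, and your assignment of types checks out against the paper's definitions, with the triple omitting $e_{i+1}$ and hence proper in every case. What each buys: the paper's version is shorter and suffices, since every application of this lemma in Theorem \ref{excludedminors} occurs after no-containment has been established; yours gives a sharper statement and makes explicit, via the nested-prefix (layer) analysis, the step the paper leaves terse, namely that failure of every reordering produces an element satisfying its condition (i) or (ii). One editorial point: your first ``after possibly reversing'' (placing the gap on the left) is never used and conflicts with the later ``blocked on, say, the left'' reversal; invoke the symmetry only once, at the blocking step.
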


\begin{proof}
  Suppose that $s(c)\not\subseteq s(d)$ for all $c,d\in E(\rho)$.
  Then $a_i<a_{i+1}\leq b_i<b_{i+1}$.  Note that if
  $j\notin\{i,i+1\}$, then
  $[a_{i+1},b_i]\not\subseteq s(e_j)=[a_j,b_j]$, since if $j<i$, then
  $b_j<b_i$, so $b_i\not\in s(e_j)$, while if $j>i+1$, then
  $a_{i+1}<a_j$, so $a_{i+1}\not\in s(e_j)$.

  Let $f$ and $g$ represent $e_i$ and $e_{i+1}$, respectively, in
  Figure~\ref{3venn}.  From the hypothesis, $T\cup X=\emptyset$ and
  $Z\neq\emptyset$.  Sets $W$ and $Y$ are both nonempty since $s(e)$
  is not contained in either $s(e_i)$ or $s(e_{i+1})$.  Sets $U$ and
  $V$ are also nonempty since $s(e)$ contains neither $s(e_i)$ nor
  $s(e_{i+1})$.

  Now $W\cup Y\cup Z$ is not an interval since otherwise
  $e_1,e_2,\dots ,e_i,e,e_{i+1},e_{i+2},\dots ,e_{n}$ would be a
  lattice path ordering on $\rho$, which is false.  Note that
  $[a_i,a_{i+1})=U\cup W$ and $[a_{i+1},b_i]=Z$ and
  $(b_i,b_{i+1}]=V\cup Y$.  Since no reordering of the support of
  $\rho_{\ba e}$ gives a lattice path ordering of $\rho$, and no
  support other than $s(e)$, $s(e_i)$, and $s(e_{i+1})$ contains $Z$,
  there is some element $e_j$ in $E(\rho )-\{e,e_i,e_{i+1}\}$ such
  that either
  \begin{itemize}
  \item[(i)] $j<i$ and $s(e_j) \cap W\ne \emptyset$ and
    $U\not\subseteq s(e_j)$, or
  \item[(ii)] $j>i+1$ and $s(e_j)\cap Y\ne\emptyset$ and
    $V\not\subseteq s(e_j)$.
  \end{itemize}
  Up to reversing the order on $\rho_{\ba e}$, we can assume that (i)
  holds.  Then $s(e_j)\cap (Z\cup Y\cup V)=\emptyset$.  Consider
  $\rho |\{e,e_j,e_i\}$.  Some element in $W$ is in the supports of
  all of these, $a_j<a_i$, and some element in $U$ is in $s(e_i)$ but
  not in $s(e)\cup s(e_j)$ and $Y$ is nonempty and is
    disjoint from $s(e_i)\cup s(e_j)$.  Therefore the proper minor
  $\rho |\{e_j,e_i,e\}$ is type $T_1$, which is a contradiction.
\end{proof}

We now prove the main result.  

\begin{thm}\label{excludedminors}
  A polymatroid is lattice path if and only if it is Boolean and
  contains no minor that is a Boolean $n$-cycle, with $n>2$, or a type
  $T_1$, $T_2$, $T_3$, $T_4$, $F_1$, $F_2$, $F_3$, or $F_4$
  polymatroid.
\end{thm}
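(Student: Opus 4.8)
The plan is to prove both directions, with nearly all the effort going into one of them. For the forward direction, recall from Section~\ref{sec:background} that lattice path polymatroids are Boolean and that the class is minor-closed. Lemma~\ref{cycles} and Lemma~\ref{3element}, together with the verification that each of $F_1$, $F_2$, $F_3$, $F_4$ is an excluded minor, show that every polymatroid on the list is genuinely an excluded minor; since no excluded minor can occur as a minor of a member of a minor-closed class, any lattice path polymatroid is Boolean and avoids all of these as minors. For the reverse direction I would argue the contrapositive: if $\rho$ is Boolean but not lattice path, choose a minor $\sigma$ of $\rho$ that is minimal (in the minor order) among non-lattice-path minors. Every proper minor of $\sigma$ is then lattice path, and $\sigma$ is Boolean because the class of Boolean polymatroids is minor-closed, so $\sigma$ is an excluded minor. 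Hence it suffices to prove the classification statement: every Boolean polymatroid that is an excluded minor for the class of lattice path polymatroids is a Boolean $n$-cycle or of type $T_1$--$T_4$ or $F_1$--$F_4$. The structural lemmas of Sections~\ref{sec:cycles}--\ref{sec:main} exist precisely to carry this out.

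The classification splits according to whether the supports of $\sigma$ form an antichain. If some support is properly contained in another, then Lemma~\ref{notcontained} immediately gives that $\sigma$ is of type $T_3$, $T_4$, $F_1$, $F_2$, or $F_3$, and we are finished. So assume no containments occur among the supports. By Lemma~\ref{distinctsupport} the supports are distinct, and by Lemma~\ref{supportgraph} the support graph $G(\sigma)$ is connected. When $|E(\sigma)|=3$, Lemma~\ref{3element} disposes of the case, yielding $T_1$, $T_2$, or a Boolean $3$-cycle, these being exactly the containment-free three-element types. The heart of the proof is the remaining antichain case with $|E(\sigma)|\geq 4$.

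For that case I would fix an element $e$ and a lattice path ordering $e_1,e_2,\dots,e_n$ of $\sigma_{\del e}$, which exists by Lemma~\ref{lem:deletionsonly}; writing $s(e_i)=[a_i,b_i]$, the antichain hypothesis forces $a_1<a_2<\cdots<a_n$ and $b_1<b_2<\cdots<b_n$. Since $\sigma$ is not lattice path, $s(e)$ cannot be rearranged into an interval compatible with this staircase, which sharply constrains how $s(e)$ meets the intervals $s(e_i)$. Lemma~\ref{onecase} is the key tool here: in the antichain case its contrapositive rules out the sandwiched configuration $\emptyset\neq s(e_i)\cap s(e_{i+1})\subseteq s(e)\subseteq s(e_i)\cup s(e_{i+1})$, since that would force a containment. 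Using the connectivity of $G(\sigma)$ to force consecutive overlaps, and using the excluded-minor hypothesis (so that every proper three- or four-element restriction must avoid $T_1$--$T_4$, $F_1$--$F_4$, and smaller Boolean cycles), I expect to show that the supports overlap in a single cyclic pattern. This makes $G(\sigma)$ an $n$-cycle, whence $\sigma$ is a Boolean $n$-cycle by the observation preceding Lemma~\ref{supportgraph}, the sole exception being the four-element situation in which $G(\sigma)$ is complete, which is then identified as type $F_4$.

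The main obstacle is exactly this antichain case with $|E(\sigma)|\geq 4$: one must show that, in the absence of any containment, the overlap pattern of the supports is forced to be cyclic for $n\geq 5$, and is either a $4$-cycle or the $F_4$ pattern when $n=4$. The delicate part is organizing the case analysis on how the reinserted support $s(e)$ straddles the staircase $e_1,\dots,e_n$, so that every deviation from the cyclic pattern exhibits one of the already-classified three- or four-element excluded minors as a proper minor, contradicting the minimality of $\sigma$. Lemma~\ref{onecase}, Lemma~\ref{3element}, Lemma~\ref{bullseye}, and the support-graph machinery of Lemma~\ref{supportgraph} are the instruments that make each such deviation produce a forbidden proper minor, and assembling these into an exhaustive and non-overlapping case split is where the real work lies.
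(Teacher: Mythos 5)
Your framework — forward direction from minor-closedness, reverse direction via a minimal non-lattice-path minor, dispatching the containment case with Lemma~\ref{notcontained} and the three-element case with Lemma~\ref{3element} — is exactly how the paper organizes things. But your proposal stops where the paper's proof actually begins. The containment-free case with $|E(\sigma)|\geq 4$ is not a detail to be ``organized''; it \emph{is} the proof of Theorem~\ref{excludedminors}: the choice of a lowest-degree non-cut vertex $e$, the sublemmas showing $d(e)>1$, the interval property of neighborhoods, $d(e)\geq 3$, the forced edge $e_ie_{i+2}$, and the final Venn-diagram analysis (\ref{deg2}, \ref{neighbors}, \ref{deg3}, \ref{k4}, \ref{sub1}, \ref{moreempty}). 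Writing ``I expect to show that the supports overlap in a single cyclic pattern'' and ``where the real work lies'' is a restatement of the difficulty, not an argument, so as it stands the proposal has a gap exactly at the theorem's core.

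Moreover, the roadmap you give for that case predicts the wrong answer, so no amount of case analysis along your lines could close it. You claim that in the antichain case with $n\geq 4$ the only possible outcomes are Boolean $n$-cycles and, when $G(\sigma)$ is complete on four vertices, type $F_4$. But type $F_2$ polymatroids need not have any containment among their supports (take, say, supports $\{1,2\}$, $\{3,4\}$, $\{5,6\}$, and $\{1,3,5,7\}$), and the support graph of such an $F_2$ is the star $K_{1,3}$ --- neither a cycle nor $K_4$. This is not a marginal case: it is precisely what the paper meets in option (iii) of sublemma \ref{deg2}, where a degree-one vertex yields an $F_2$ restriction. A second, smaller error: you assert that connectivity of $G(\sigma)$ forces consecutive supports in the lattice path ordering of $\sigma_{\del e}$ to overlap (your ``staircase''). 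What is needed is connectivity of $G(\sigma)\del e = G(\sigma_{\del e})$; for an arbitrary $e$ the ordering of $\sigma_{\del e}$ can have gaps $b_i<a_{i+1}$. This is exactly why the paper takes $e$ to be a vertex of lowest degree that is not a cut vertex, and that choice (not just connectivity plus the antichain property) is what drives the degree and adjacency sublemmas.
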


\begin{proof} 
  The class of lattice path polymatroids is minor-closed and is
  contained in the class of Boolean polymatroids.  Since none of the
  polymatroids listed above is lattice path, a polymatroid is not
  lattice path if it has any of these as a minor.
  
  If such exists, let $\rho$ be an excluded minor for the class of
  lattice path polymatroids that is a Boolean polymatroid
    but not among those identified above.  Since $\rho$ is Boolean
  and is an excluded minor, by Lemmas \ref{lem:AisUnique}
    and \ref{distinctsupport} each element in $E(\rho)$ can be
  identified by its support.  Since every polymatroid with at most two
  elements is lattice path, $|E(\rho )|\geq 3$.  By
  Lemma~\ref{3element}, $|E(\rho )|\geq 4$.

  Let $G=G(\rho)$.  By Lemma~\ref{supportgraph}, $G$ is connected.
  Let $e$ be a vertex of lowest degree that is not a cut vertex.  Let
  $e_1,e_2,\dots ,e_{n}$ be a lattice path ordering on $\rho_{\ba e}$,
  where $s(e_i)=[a_i,b_i]$ for all $i$ and
  $a_1\leq a_2\leq \cdots \leq a_{n}$ and
  $b_1\leq b_2\leq \cdots \leq b_{n}$.  Since $G\ba e$ is connected
  and Lemma~\ref{notcontained} implies that no support contains
  another, $a_{i-1}<a_i\leq b_{i-1} <b_i$ for each
  $i\in\{2,3,\dots ,n\}$.  Furthermore, $G$ contains $e_1e_2\dots e_n$
  as a path, although this may not be an induced path.  We show that

  \begin{sublemma}\label{deg2}
    the degree $d(e)>1$.
  \end{sublemma}
  
  Suppose $d(e)=1$.  Then $s(e)\cap [a_1,b_n]$ is contained one of the
  following sets:
  \begin{itemize}
  \item[(i)] $[a_1,a_2)$,
  \item[(ii)] $(b_{n-1},b_{n}]$, or
  \item[(iii)] $(b_{i-1},a_{i+1})$ for some $i\in\{2,3,\dots ,n-1\}$.
  \end{itemize}
  These options yield the contradictions (i) $e,e_1,e_2,\dots ,e_n$ is
  a lattice path ordering of $\rho$, (ii) $e_1,e_2,\dots ,e_n,e$ is a
  lattice path ordering of $\rho$, (iii)
  $\rho |\{e,e_{i-1},e_i,e_{i+1}\}$ is type $F_2$.  Thus~\ref{deg2}
  follows.

  Next we show that in the support graph $G$,
  \begin{sublemma}\label{neighbors}
    if $e$ is adjacent to non-adjacent vertices $e_i$ and $e_j$, where
    $i<j$, then $e$ is adjacent to some $e_k$ where $i<k<j$.
  \end{sublemma}
  
  Suppose $e$ is adjacent to $e_i$ and $e_j$ but to none of
  $e_{i+1},e_{i+2},\dots ,e_{j-1}$.  Take a shortest path $P$ between
  $e_i$ and $e_j$ in the graph that $G$ induces on
  $\{e_i,e_{i+1},\dots ,e_j\}$.  Then $P$ together with $e$ forms an
  induced cycle in $G$ with at least four vertices, and
  $\rho |V(P)\cup\{e\}$ is a Boolean $m$-cycle for some integer
  $m\geq 4$.  This contradiction proves~\ref{neighbors}.

  Next we show that
  \begin{sublemma}\label{deg3}
    $d(e)\geq 3$.
  \end{sublemma}
  
  Suppose $d(e)=2$, and let $e_i$ and $e_j$ be adjacent to $e$, where
  $i<j$.  By~\ref{neighbors}, $e_ie_j$ is an edge in $G$.  Let $f$
  represent $e_i$ and $g$ represent $e_j$ in Figure~\ref{3venn}.  If
  $Z=\emptyset$, then $W$, $X$, and $Y$ are all nonempty, so
  $\rho |\{e,e_i,e_j\}$ is a Boolean $3$-cycle, which is a
  contradiction.  Thus, $Z\neq \emptyset$.  Then $Z$ is contained in
  the support of every element in $e_i,e_{i+1},\dots ,e_j$, so
  $j=i+1$.  Since $\rho |\{e,e_i,e_{i+1}\}$ is not type $T_1$ or
  $T_2$, some set in $\{T,U,V\}$ is empty and therefore some set in
  $\{W,X,Y\}$ is empty.  Since no element has its support contained in
  the support of another element, at least one of the following
  statements must hold:
  \begin{enumerate}
  \item[(i)] $T\cup X=\emptyset$ and $U$, $V$, $W$, and $Y$ are all
    nonempty;
  \item[(ii)] $U\cup Y=\emptyset$ and $T$, $V$, $W$, and $X$ are all
    nonempty; or
  \item[(iii)] $V\cup W=\emptyset$ and $T$, $U$, $X$, and $Y$ are all
    nonempty.
  \end{enumerate}
  If (i) occurs, then we get a contradiction by Lemma~\ref{onecase}.
  Up to reversing the order on $\rho_{\ba e}$, we may assume that (ii)
  occurs.  If $i\neq 1$, then $s(e_{i-1})\cap (e_i)\ne\emptyset$ while
  $s(e_{i-1})\cap s(e)=\emptyset$, so $s(e_{i-1})\cap X\ne\emptyset$.
  Then $b_{i-1}\geq a_{i+1}$, and $W\subseteq s(e_{i-1})$, contrary to
  $e$ and $e_{i-1}$ not being adjacent.  So $i=1$ and
  $e,e_1,e_2,\dots ,e_{n-1}$ is a lattice path ordering of $\rho$,
  which is a contradiction.  This completes the proof of~\ref{deg3}.

  Assume that $e$ is adjacent to $e_i$, $e_j$, and $e_k$, with
  $i<j<k$, and to no $e_h$ with $i<h<j$ or $j<h<k$.
  By~\ref{neighbors}, both $e_ie_j$ and $e_je_k$ are edges in $G$.
  Let $f$ represent $e_i$ and $g$ represent $e_j$ in
  Figure~\ref{3venn}.  If $Z=\emptyset$, then $W$, $X$, and $Y$ are
  all nonempty, and $\rho |\{e,e_i,e_j\}$ is a Boolean $3$-cycle,
  which is impossible.  Thus, $Z\ne\emptyset$, i.e.,
  $s(e)\cap s(e_i)\cap s(e_j)\neq\emptyset$.  Thus,
  $s(e)\cap s(e_h)\ne \emptyset$ for all $h$ with $i\leq h\leq j$.
  The same argument shows that
  $s(e)\cap s(e_j)\cap s(e_k)\neq\emptyset$, and so
  $s(e)\cap s(e_h)\ne \emptyset$ for all $h$ with $j\leq h\leq k$.
  Thus, $j=i+1$ and $k=i+2$.

  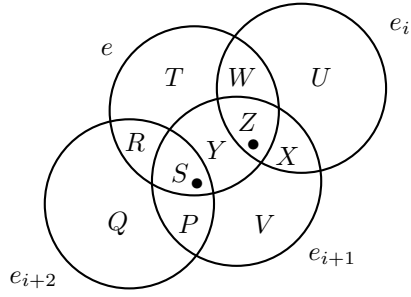
\begin{figure}
\begin{tikzpicture}[scale=0.75]
  \draw[thick] (2.25,3.5) circle (1.5);
  \draw[thick] (4.15,3.9) circle (1.5);
  \draw[thick] (1.1,1.85) circle (1.5);
  \draw[thick] (3,2.25) circle (1.5);
  \node () at (1.9,4.1) {$T$};
  \node () at (4.5,4.1) {$U$};
  \node () at (3.5,1.5) {$V$};
  \node () at (3.2,3.3) {$Z$};
  \node () at (3.1,4.1) {$W$};
  \node () at (2.65,2.8) {$Y$};
  \node () at (2,2.4) {$S$};
  \node () at (0.9,1.5) {$Q$};
  \node () at (2.15,1.5) {$P$};
    \node () at (1.2,2.95) {$R$};
  \node () at (3.9,2.7) {$X$};
  \node () at (5.9,5) {$e_i$};
  \node () at (0.7,4.6) {$e$};
  \node () at (4.7,0.9) {$e_{i+1}$};
  \node () at (-0.6,0.5) {$e_{i+2}$};
  \node () at (3.3,2.9) {$\bullet$};
  \node () at (2.3,2.2) {$\bullet$};
\end{tikzpicture}

\caption{Venn diagram showing the supports of $e$, $e_i$, $e_{i+1}$,
  and $e_{i+2}$.}
    \label{rgv}
  \end{figure}

  We show that
  \begin{sublemma}\label{k4}
    $e_ie_{i+2}$ is an edge in $G$.
  \end{sublemma}

  Suppose not.  Then the supports of $e$, $e_i$, $e_{i+1}$, and
  $e_{i+2}$ are as shown in Figure~\ref{rgv}.  Since $\rho$ has no
  restriction that is type $T_1$, at least one of $Q$, $T\cup W$, and
  $V\cup X$ is empty, as is at least one of $R\cup T$, $U$, and
  $P\cup V$.  Since no restriction of $\rho$ is type $T_2$, either
  $P=\emptyset$ or $R=\emptyset$; also, either $W=\emptyset$ or
  $X=\emptyset$.  No support contains another support, so
  $Q\ne\emptyset$ and $U\ne\emptyset$.  Since
  $s(e)\nsubseteq s(e_{i+1})$, at least one of $T$, $W$, and $R$ is
  nonempty.  Similarly at least one of $P$, $V$, and $X$ is nonempty.
  After possibly reversing the order on $\rho_{\ba e}$, it follows
  that $R$, $T$, $V$, and $X$ are empty, and $P$, $Q$, $S$, $U$, $W$,
  and $Z$ are nonempty.  Now Lemma~\ref{onecase} gives a contradiction
  since
  $\emptyset\neq s(e_i)\cap s(e_{i+1})\subseteq s(e)\subseteq
  s(e_i)\cup s(e_{i+1})$.  This proves~\ref{k4}.

  \begin{figure}
    \begin{tikzpicture}[scale=0.5]
\draw[thick, shift={(-2.5 cm, 0 cm)}, rotate=50] (0,0) ellipse (2.5cm and 4.7cm);
\draw[thick, shift={(2.5 cm, 0 cm)}, rotate=-50] (0,0) ellipse (2.5cm and 4.7cm);
\draw[thick, shift={(0 cm, 1 cm)},  rotate=50] (0,0) ellipse (2.2cm and 4.5cm);
\draw[thick, shift={(0 cm, 1 cm)}, rotate=-50] (0,0) ellipse (2.2cm and 4.5cm);

  \node () at (0,-2.75) {$Q$};
  \node () at (0,-0.75) {$Z$};
  \node () at (0,2.5) {$T$};
  \node () at (-2.25,3.5) {$M$};
  \node () at (2.25,3.5) {$N$};
  \node () at (-3,2.5) {$S$};
  \node () at (3,2.5) {$U$};
  \node () at (-5,1.5) {$L$};
  \node () at (5,1.5) {$O$};
  \node () at (-1.75,0.7) {$X$};
  \node () at (1.75,0.7) {$Y$};
  \node () at (-2.75,-1.2) {$R$};
  \node () at (2.75,-1.2) {$P$};
  \node () at (-0.9,-1.7) {$W$};
  \node () at (0.9,-1.7) {$V$};

  \node () at (-6.6,3.2) {$a$};
  \node () at (-3.25,4.7) {$b$};
  \node () at (3.25,4.7) {$c$};
  \node () at (6.6,3.2) {$d$};
  
 \end{tikzpicture}

  \caption{Venn diagram showing the supports of elements $a$, $b$, $c$,
  and $d$.}\label{4venn}
  \end{figure}
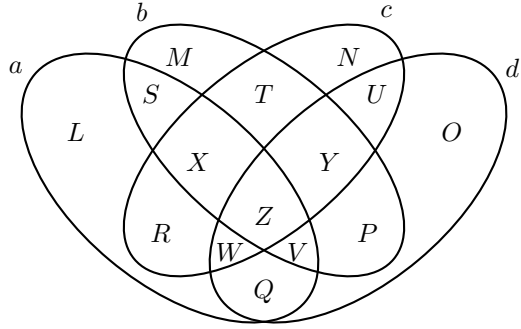

  Let the elements in $\{e,e_i,e_{i+1},e_{i+2}\}$ be represented by
  $\{a,b,c,d\}$ in Figure~\ref{4venn}.  We show that

  \begin{sublemma}\label{sub1}
    $Z=\emptyset$.
  \end{sublemma}

  Suppose $Z\neq\emptyset$.  No restriction of $\rho$ to any set of
  three elements in $\{a,b,c,d\}$ is type $T_1$.  Hence, for each
  triple $\{x,y,z\}\subseteq \{a,b,c,d\}$, at least one set in
  $s(x)-(s(y)\cup s(z))$ and $s(y)-(s(x)\cup s(z))$ and
  $s(z)-(s(x)\cup s(y))$ is empty.  Without loss of generality,
  $s(d)-(s(a)\cup s(b))=O\cup U=\emptyset$, and at least one of
  $L\cup Q$, $M\cup P$, and $N$ is also empty.  Hence, at least one of
  $L\cup Q\cup O$, $M\cup P\cup O$, and $N\cup U\cup O$ is empty.  Up
  to changing the labels of $a$, $b$, $c$, and $d$, we may assume that
  $N\cup O\cup U=\emptyset$.  Since no element has its support
  contained in another by Lemma~\ref{notcontained}, each of the
  following sets is nonempty: $P\cup Y$, $Q\cup W$, $T\cup Y$, and
  $R\cup W$.  Since neither $\rho |\{a,b,c\}$ nor $\rho |\{a,b,d\}$ is
  type $T_2$, both $S\cup V$ and $S\cup X$ are empty.

  Since no support contains another, none of the following sets is
  empty: $L\cup Q$, $M\cup P$, $L\cup R$, $M\cup T$, $R\cup T$, and
  $P\cup Q$.  Since neither $\rho |\{a,c,d\}$ nor $\rho |\{b,c,d\}$ is
  type $T_1$, it follows that at least one of $L,P$, and $T$ is empty
  and at least one of $M,Q$, and $R$ is empty.  Since neither
  $\rho |\{a,c,d\}$ nor $\rho |\{b,c,d\}$ is type $T_2$, it follows
  that at least one of $Q$, $R$, and $Y$ is empty and at least one of
  $P$, $T$, and $W$ is empty.  Thus,
  \begin{enumerate}
  \item[(i)] if $P=\emptyset$, then $R=\emptyset$ and $L$, $M$, $Q$,
    $T$ ,$W$, and $Y$ are all nonempty;
  \item[(ii)] if $T=\emptyset$, then $Q=\emptyset$ and $L$, $M$, $P$,
    $R$, $W$, and $Y$ are all nonempty;
  \item[(iii)] if $L=\emptyset$, then $M\cup W\cup Y=\emptyset$ and
    $P$, $Q$, $R$, and $T$ are all nonempty, and $\rho |\{a,b,c,d\}$
    is type $F_4$, where $A=Q$, $B=R$, $C=Z$, $D=T$, and $E=P$.
  \end{enumerate}

  Since (iii) gives a contradiction and (ii) is obtained from (i) by
  switching the labels on $a$ and $b$, we assume that (i) holds.  Note
  that $b,c,d,a$ is a lattice path ordering of $\rho |\{a,b,c,d\}$ and
  $M,T,Y,Z,W,Q,L$ is a lattice path ordering of the support of
  $\rho |\{a,b,c,d\}$.  Up to reversing the lattice path ordering on
  $\rho_{\ba e}$, where $e_i<e_{i+1}<e_{i+2}$, the fact that $L$, $M$,
  $Q$, $T$, $W$, and $Y$ are all nonempty implies that $(b,c,d,a)$ is
  one of $(e,e_i,e_{i+1},e_{i+2})$, $(e_i,e,e_{i+1},e_{i+2})$,
  $(e_i,e_{i+1},e,e_{i+2})$, and $(e_i,e_{i+1},e_{i+2},e)\}$.

  By Lemma~\ref{onecase}, $e\not\in\{c,d\}$, so $e\in\{a,b\}$.  Then
  $s(e_i)\cap s(e_{i+2})=[a_{i+2},b_i]$ is either $Y\cup Z$ or
  $Z\cup W$, both of which are nonempty.  Then
  $$s(e_i)\cap s(e_{i+2})\subseteq s(e_{i+1})\subseteq s(e_i)\cup
  s(e_{i+2}).$$ We next show that $e_i$ and $e_{i+2}$ are consecutive
  elements in a lattice path ordering of $\rho_{\ba e_{i+1}}$, but
  that is impossible by Lemma~\ref{onecase}, and so that will complete
  the proof of~\ref{sub1}.  The two options for $(b,c,d,a)$ show that
  $s(e_i)\cap s(e_{i+2})\not\subseteq s(e)$.  Now
  $[a_{i+2},b_i]\not\subseteq s(e_{i-1})$ since $b_i>b_{i-1}$.
  Similarly, $[a_{i+2},b_i]\not\subseteq s(e_{i+3})$.  Thus, $e_i$ and
  $e_{i+2}$ are the only elements in $\rho_{\ba e_{i-1}}$ that contain
  $[a_{i+2},b_i]$ in their supports, and so, as claimed, they must be
  consecutive in the lattice path ordering on $\rho_{\ba e_{i+1}}$ by
  Lemma \ref{lem:S3}.  Thus we have shown~\ref{sub1}.

  We show that
  \begin{sublemma}\label{moreempty}
    $s(x)\cap s(y)\cap s(z)=\emptyset$ for every triple
    $\{x,y,z\}\subseteq \{a,b,c,d\}$.
  \end{sublemma}

  Suppose $s(a)\cap s(b)\cap s(c)$ is nonempty.  Then
  $X\neq\emptyset$.  Since $\rho |\{a,b,c\}$ is not type $T_1$, at
  least one of the sets $L\cup Q$, $M\cup P$, and $N\cup U$ is empty.
  Up to relabeling $a$, $b$, and $c$, we may assume that
  $N\cup U=\emptyset$.  Since $\rho |\{a,b,c\}$ is not type $T_2$, at
  least one of the sets $R\cup W$, $S\cup V$, and $T\cup Y$ is empty.
  Two of those
    options yield the contradiction that some support contains
    another, so $S\cup V=\emptyset$; avoiding
    other instances of some support containing another implies that 
  $L\cup Q$, $R\cup W$, $T\cup Y$, and $M\cup P$ are all nonempty.
  Then
  \begin{align*}
    &s(a)=L\cup Q\cup R\cup W\cup X,\\
    &s(c)=R\cup W\cup X\cup T\cup Y,\\
    &s(b)=X\cup T\cup Y\cup M\cup P.
  \end{align*}
  Thus $L\cup Q,R\cup W,X,T\cup Y,M\cup P$ is a lattice path ordering
  of the support of $\rho |\{a,b,c\}$.  Now
  $s(d)= Q\cup W\cup Y\cup P\cup O$.  If $Q\cup W$ and $Y\cup P$ are
  both nonempty, then $\rho |\{a,b,d\}$ is a Boolean 3-cycle, which is
  a contradiction.  Therefore either $Q\cup W$ or $Y\cup P$ is empty.
  Hence either $s(a)\cap s(d)=\emptyset$ or $s(b)\cup s(d)=\emptyset$,
  which is a contradiction.  Then~\ref{moreempty} follows by symmetry.

  Thus, the sets $V$, $W$, $X$, $Y$, and $Z$ in Figure~\ref{4venn} are
  empty.  Since $s(y)\cap s(z)\neq\emptyset$ for all
  $\{y,z\}\subseteq \{a,b,c,d\}$, the following sets are nonempty:
  $P$, $Q$, $R$, $S$, $T$, and $U$.  Therefore $\rho |\{a,b,c\}$ is a
  Boolean $3$-cycle.  This contradiction completes our proof.
\end{proof}

\section*{Acknowledgements}
The AMS Simons Collaboration Grant for Mathematicians (No. 519521)
held by the second author supported her work on this project as well
as the third author's work on this project.

\end{document}